\documentclass[11pt]{article}
\date{}
\usepackage{amssymb}
\usepackage{amsmath}
\usepackage{amsthm}
\usepackage{graphicx}
\usepackage{indentfirst}
\allowdisplaybreaks[4]
\newtheorem{theorem}{Theorem}

\newtheorem{corollary}[theorem]{Corollary}

\newtheorem{definition}[theorem]{Definition}

\newtheorem{lemma}[theorem]{Lemma}

\newtheorem{remark}[theorem]{Remark}

\numberwithin{equation}{section}
\numberwithin{theorem}{section}
\newcommand{\keywords}[1]{\par\noindent\textbf{Keywords:} #1}
\newcommand{\subjclass}[2]{
  \par\noindent\textbf{Mathematics Subject Classification} #2}

\begin{document}

\title{Nonexistence of solutions to $\Delta_pu+\Delta_qu+u^s|\nabla u|^t\leq 0$ on geodesically complete noncompact Riemannian manifolds}

\author{Biqiang Zhao\\
\small Beijing International Center for Mathematical Research,
\\
\small Peking University, Beijing, China\\ \small
biqiangzhao123@outlook.com}
            
\maketitle
\setlength{\parindent}{2em}

\begin{abstract}
In this paper, we consider the inequality $\Delta_pu+\Delta_qu+u^s|\nabla u|^t\leq 0$ on geodesically complete noncompact Riemannian manifolds. By a test function argument, we establish Liouville-type theorems under the upper bound of volume of geodesic ball. In the Euclidean space $\mathbb{R}^n$, we obtain new nonexistence results which extend the result of Bhakta-Biswas-Filippucci \cite{BBF}. In particular, we have addressed the influence of the higher order term for $s<0$. At last, we present some examples to illustrate sharpness in some cases.

\end{abstract}

\keywords{ Liouville-type theorem,  quasilinear inequality,  $(p,q)$-Laplacian}
\subjclass [{ 35J60, 35J92, 53C20, 53A55 }


\section{Introduction}
\label{1}
 In recent years, the study of nonexistence results for weak solutions to differential equations and inequalities has attracted much attention \cite{AS,BBF2,BMP,BMGV,Fi1,GK,MMP1,MMP2,MMP3}. The purpose of the present paper is to study the Liouville type theorems for weak solution of some differential inequalities with $(p,q)$-Laplace operator involving gradient nonlinearities on geodesically complete noncompact Riemannian manifolds.
 \par
 This problem has a long history. In 1844, Cauchy \cite{Ca} published the first statement of what is now known as the Liouville theorem for bounded analytic functions. Since then many improvements or generalizations have been generalized to more equations or inequalities. In the seminal paper \cite{GS}, Gidas and Spruck proved that there are no positive solution to the equation
 \begin{align}
     \Delta u+u^p=0,\quad  \mathrm{in}\ \mathbb{R}^n,\quad n>2 
 \end{align}
 if 
 \begin{align*}
 1<p<\frac{n+2}{n-2}.
 \end{align*}
 Moreover, there exists no nontrivial nonnegative supersolution of (1.1) provided that
\begin{align*}
    1<p\leq \frac{n}{n-2}.
 \end{align*}
 \par
 Gidas and Spruck's result can be generalized to $p$-Laplacian. Serrin and Zou \cite{SZ} proved that there are no positive solution to the classical Lane-Emden-Fowler equation
 \begin{align}
     \Delta_pu+u^\alpha=0, \quad \mathrm{in}\  \mathbb{R}^n
 \end{align}
 if
 \begin{align*}
     1<p<n, \quad 1<\alpha<\frac{(n+1)p-n}{n-p}.
 \end{align*}
Mitidieri and Pohozaev \cite{MP1} proved that there exists no nontrivial 
nonnegative supersolution of (1.2) provided that 
\begin{align*}
    1<\alpha\leq \frac{(p-1)n}{n-p},\quad 1<p<n.
\end{align*}
Moving to exterior domains, Bidaut-V$\acute{\mathrm{e}}$ron and Pohozaev \cite{BMP} showed that the nonneagtive supersolution of (1.2) is $u\equiv 0$ if $ p<n$ and $ 1<\alpha\leq \frac{(p-1)n}{n-p}$ or $ p=n$ and $1<\alpha<\infty$. In \cite{MP2}, Mitidieri and Pohozaev first proved the Liouville property for the inequality involving a gradient nonlinearity of the form
\begin{align}
    \Delta_p u+u^s|\nabla u|^m\leq 0,\quad  \mathrm{in}\ \mathbb{R}^n
\end{align}
when the exponents belong to the subcritical range given by
    \begin{align*}
    s(n-p)+m(n-1)<n(p-1), \quad s+m>p-1.
\end{align*}
The cases $s(n-p)+m(n-1)=n(p-1) $ and $ 0<s\leq p-m-1$ were proved by Filippucci \cite{Fi1,Fi2}.
\par
 For general Riemannian manifolds, Sun, Xiao and Xu \cite{SXX} established a sharp Liouville principle for the weak solutions to the quasilinear elliptic inequality  (1.3) for $(p,s,m)\in (1,\infty)\times \mathbb{R}\times \mathbb{R}$. In particular, they obtained nonexistence results for $(s,m)\in  (-\infty,0)\times (-\infty,0)$. The approach in \cite{SXX} has a more recent history. Inspired by \cite{Ku}, Grigor’yan and Kondratiev \cite{GK} and Grigor’yan and Sun \cite{AS} studied the differential inequality of the
form
\begin{align}
    \Delta u+u^\sigma\leq 0
\end{align}
on a Riemannian manifold $M$. Particularly in \cite{AS}, Grigor’yan and Sun proved that (1.4) has no nontrivial nonnegative solution if for some $x_0\in M$ and large enough $r$,
\begin{align*}
    \mathrm{vol} B(x_0,r)\leq C r^{\frac{2\sigma}{\sigma-1}}\mathrm{ln}^{\frac{1}{\sigma-1}}r,
\end{align*}
where $B(x_0,r) $ is the geodesic ball. They also showed the exponents $\frac{2\sigma}{\sigma-1}$ and $\frac{1}{\sigma-1} $ are sharp. Later, Mastrolia, Monticelli and Punzo \cite{MMP1} investigated a class of differential inequalities with a potential and showed that the potential function gives a direct influence on the nonexistence of nonnegative solutions.
\par 
In this paper, we are concerned with the differential inequality with $(p,q)-$Laplacian operator, i.e.,
\begin{align}
    \Delta_pu+\Delta_qu+u^s|\nabla u|^t\leq 0
\end{align}
  on a geodesically complete noncompact Riemannian manifold $M$. The idea of studying such operators comes from the problems of the calculus of variations and nonlinear elasticity theory, cf. \cite{Ma1,Ma2,Zh1,Zh2}. For example, the $(p,q)-$Laplace operator is related to the study of reaction-diffusion systems
  \begin{align*}
      u_t=\mathrm{div}(A(u)\nabla u)+c(x,u).
  \end{align*}
 The $(p,q)$-Laplace operator can be obtained by the form $A(u)=|\nabla u|^{p-2}+|\nabla u|^{q-2}$. In the last few years, the analysis aspects of $(p,q)$-Laplace operator have many achievements. In \cite{BT}, Bobkov and Tanaka studied the existence and nonexistence of positive solutions for the $(p,q)$-Laplace equations with two parameters in a bounded domain. Wang and Zhang \cite{WZ} obtained the gradient estimates for solutions to nonlinear elliptic equation driven by the $(p,q)$-Laplace operator. Recently, Bhakta, Biswas and Filippucci \cite{BBF} established several Liouville-type theorems for differential quasilinear inequalities with $(p,q)$-Laplace in the entire $\mathbb{R}^n$ (or an exterior domain). In \cite{ZZ}, Zhou and Zhu improved the Serrin’s index range in \cite{BBF} by the vector field method. In \cite{BBF2}, Bhakta, Biswas and Filippucci obtained Liouville theorems for ($p,q)$-Laplace elliptic equations with source terms involving gradient nonlinearity. In \cite{Zhao}, the author studied the nonnegative solutions of the differential inequality with $(p,q)$-Laplacian operator on Riemannian manifolds and generalized the result of Mastrolia-Monticelli-Punzo \cite{MMP1} to $(p,q)$-Laplace operator. In \cite{zhao2}, the author established Liouville-type theorems for parabolic
differential inequalities with $(p,q)$-Laplacian operator on Riemannian
manifolds under the weighted volume growth assumptions.
\par
Throughout the paper, we assume that $1<q\leq  p$ and $M$ is a geodesically complete noncompact Riemannian manifold equipped with the Riamannian distance $d(\cdot,\cdot)$ and measure $\mu$. Denote by $V(r)$ the volume of the geodesic ball $B(x_0,r)$ centered at $x_0$ with radius $r$. Since the constant $C>0$ is not important, it may vary at different occurrences. First, we give the definition of the weak solution.
\begin{definition}
    Let $p>q>1,\ (s,t)\in \mathbb{R}\times \mathbb{R}$. We say that $u\in C^{1}(M)$ is a positive weak solution of (1.5) if $u>0$, $u^s|\nabla u|^t\in L_{loc}^{1}(M)$ and for every $0\leq \psi\in W^{1,p}(M)\cap L^{\infty}(M)$ with compact support, one has
    \begin{align}
        -\int_M|\nabla u|^{p-2}\langle \nabla u,\nabla \psi\rangle d\mu -\int_M|\nabla u|^{q-2}\langle \nabla u,\nabla \psi\rangle d\mu+\int_Mu^s|\nabla u|^t\psi d\mu \leq 0.
    \end{align}
\end{definition}
    Next we define the following regions $\{G_i\}_{i=1}^4$ (see Section 2) by
    \begin{align*}  
    \begin{cases}
        G_1=\{(s,t)|\  s\geq 0,t>p-s-1 \}; \\
           \\
           G_2=\{(s,t) |\ s<0,\frac{s}{q-1}+\frac{t}{p-1}>1\}; \\
           \\
           G_3=\{(s,t)|\ t>q-1,\frac{s}{p-1}+\frac{t}{q-1}<1\} ;\\
           \\
           G_4=\{(s,t)|\ t\leq q-1,s<q-t-1 \}.
    \end{cases}   
\end{align*}
    Now we state the main results.
    \begin{theorem}   
     For $(s,t)\in G_1$, (1.5) admits no nontrivial positive weak solution if one of the following assumptions holds:
 
 ~\\
   (a)  $(s,t)\in G_{1,a}=G_1\cap\{(s,t)|\ t<q\}$ and there is 
   \begin{align}
       V(r)\leq C r^{\frac{qs+t}{s+t-q+1}}(\mathrm{ln}\ r)^{\frac{q-1}{s+t-q+1}}, \quad \forall r>>1;
   \end{align}
   \\
   (b)  $(s,t)\in G_{1,b}=G_1\cap\{(s,t)|\ t\geq q\}$ and there is 
   \begin{align}
       V(r)\leq C r^{q}(\mathrm{ln}\ r)^{q-1}, \quad \forall r>>1.
   \end{align}    
    \end{theorem}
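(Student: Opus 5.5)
We argue by the test function method in the spirit of Grigor'yan--Sun and Sun--Xiao--Xu, adapted to the two operators $\Delta_p$ and $\Delta_q$. Let $u>0$ be a weak solution of (1.5) with $(s,t)\in G_1$. In (1.6) we take $\psi=u^{-d}\varphi^{k}$, where $d>0$ is small, $k$ is large, and $\varphi$ is a cutoff. For the logarithmic case we choose the Grigor'yan--Sun cutoff: $\varphi=1$ on $B(x_0,r)$, $\varphi=0$ outside $B(x_0,r^2)$, and $\varphi=\ln(r^2/d(x_0,x))/\ln r$ in between. If necessary, $u$ is replaced by $u+\varepsilon$ and we then let $\varepsilon\to0$.

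\emph{Step 1 (basic integral inequality).} Since $\nabla\psi=-d u^{-d-1}\varphi^k\nabla u+k u^{-d}\varphi^{k-1}\nabla\varphi$, the test yields
\begin{align*}
d\int (|\nabla u|^p+|\nabla u|^q)u^{-d-1}\varphi^k+\int u^{s-d}|\nabla u|^t\varphi^k\le k\int (|\nabla u|^{p-1}+|\nabla u|^{q-1})u^{-d}\varphi^{k-1}|\nabla\varphi|.
\end{align*}
We then apply Young's inequality to the right-hand side, absorbing half of each gradient term into the left. This leaves
\begin{align*}
I:=\int (|\nabla u|^p+|\nabla u|^q)u^{-d-1}\varphi^k+\int u^{s-d}|\nabla u|^t\varphi^k\le C\int \big(u^{p-1-d}|\nabla\varphi|^p+u^{q-1-d}|\nabla\varphi|^q\big)\varphi^{k-p}.
\end{align*}

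\emph{Step 2 (eliminating $u$ on the right).} The right side still contains $u$. We bound each term $\int u^{m-1-d}|\nabla\varphi|^{m}\varphi^{k-m}$, $m\in\{p,q\}$, by H\"older/Young against the quantity $J=\int u^{s-d}|\nabla u|^t\varphi^k$ combined with $\int|\nabla u|^{m}u^{-d-1}\varphi^k$. Concretely, we write $u^{m-1-d}$ as a product $(u^{s-d}|\nabla u|^t)^{a}(|\nabla u|^{m}u^{-d-1})^{b}\cdot(\text{leftover power of }|\nabla u|)$. The condition $t>p-s-1$ makes the homogeneity solvable with $a+b<1$, so Young gives
\begin{align*}
\int u^{m-1-d}|\nabla\varphi|^m\varphi^{k-m}\le \tfrac12 I+C\int|\nabla\varphi|^{\theta_m}\varphi^{k-\theta_m}.
\end{align*}
A cleaner alternative, which we try first, is to first derive from a second test $\psi=\varphi^k$ (the case $d=0$) that $J\le C\int(|\nabla u|^{p-1}+|\nabla u|^{q-1})\varphi^{k-1}|\nabla\varphi|$, and then to close via H\"older using Step 1. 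The exponents $\theta_m$ are computed from $s,t,m$. For $m=q$ and $t<q$ one obtains $\theta_q=\frac{qs+t}{s+t-q+1}$; for the $p$-term, since $p\ge q$ and $|\nabla\varphi|\le C/r$ on large balls, it is dominated by the $q$-term. When $t\ge q$ the exponent saturates at $\theta=q$ (the $q$-Laplacian alone, which is $q$-parabolic-like), which gives case (b).

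\emph{Step 3 (volume growth and the logarithmic cutoff).} With $|\nabla\varphi|\le 1/(\rho\ln r)$ on annuli, $r\le\rho\le r^2$, we decompose the region into dyadic annuli and use (1.7) (resp.\ (1.8)). This gives $\int|\nabla\varphi|^{\theta}\le C(\ln r)^{-\theta}\sum_j 2^{-j\theta}V(2^{j+1})\le C(\ln r)^{-\theta}\sum_j (j)^{\frac{q-1}{s+t-q+1}}$ with $j\lesssim\ln r$. The exponent relation $\frac{q-1}{s+t-q+1}=\theta-1$ (resp.\ $q-1$) makes this bounded uniformly in $r$. Hence $J<\infty$ globally. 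Repeating the estimate but keeping the integral only over the annulus $B(r^2)\setminus B(r)$, where $\nabla\varphi$ is supported, and using H\"older with the now finite global integrals restricted to the annulus (which tend to $0$ by dominated convergence), we get $J(B_r)\to0$. Therefore $u^s|\nabla u|^t\equiv0$, so $u$ is constant, and a positive constant with $t>0$ or $s\ge0$ contradicts nontriviality.

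The main obstacle is Step 2. We must choose $d$ and the Young exponents so that the resulting $\theta$ is exactly $\frac{qs+t}{s+t-q+1}$, with all intermediate exponents positive under $(s,t)\in G_{1,a}$, and so that the $p$-Laplacian term is genuinely of lower order. The borderline logarithmic exponent requires the sharp choice $d\to0$, so we expect to let $d$ depend on $r$ (e.g.\ $d\sim 1/\ln r$), as in Grigor'yan--Sun, and to track constants carefully.
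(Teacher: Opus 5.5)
Your architecture is the paper's: the test function $u^{-d}\varphi^k$, the logarithmic cutoff, $d\sim 1/\ln r$, the dyadic volume count, and a second pass on the annulus. However, Step 2 fails as written.

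The standard Young inequality in Step 1, with exponents $(\frac{m}{m-1},m)$, discards every power of $|\nabla u|$. After that, no H\"older/Young argument can bound $\int u^{m-1-d}|\nabla\varphi|^m\varphi^{k-m}$ by $\frac12 I+C\int|\nabla\varphi|^{\theta_m}\varphi^{k-\theta_m}$. For $t>0$, a product $(u^{s-d}|\nabla u|^t)^{\lambda}(u^{-d-1}|\nabla u|^m)^{\mu}$ with $\lambda,\mu\ge 0$ has $|\nabla u|$-exponent $t\lambda+m\mu$, which vanishes only if $\lambda=\mu=0$, so the ``leftover power of $|\nabla u|$'' can never be absorbed. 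Concretely, $u\equiv c$ gives $I=0$, while the left side equals $c^{m-1-d}\int|\nabla\varphi|^m\varphi^{k-m}$, which is unbounded in $c$ since $d<m-1$. Your alternative with $\psi=\varphi^k$ breaks the same way. H\"older there leaves either the positive pure power $u^{(d+1)(m-1)}$ or, against $u^s|\nabla u|^t$, the negative power $u^{-s(m-1)/(t-m+1)}$, which is uncontrolled when $s>0$.

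The missing idea, which is the content of the paper's Lemma 2.1, is to interpolate \emph{before} discarding $|\nabla u|$. With $D=ms+t+d(t-m)$, write
\[ u^{-d}|\nabla u|^{m-1}\varphi^{k-1}|\nabla\varphi|=\bigl(u^{-d-1}|\nabla u|^m\varphi^k\bigr)^{x}\bigl(u^{s-d}|\nabla u|^t\varphi^k\bigr)^{y}\varphi^{\frac{k}{\theta_m}-1}|\nabla\varphi|, \]
where $x=\frac{(m-1)s+d(t-m+1)}{D}$, $y=\frac{m-1-d}{D}$ and $1-x-y=\frac{s+t-m+1}{D}=\theta_m^{-1}$. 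Then apply a three-term Young inequality. Your homogeneity count is correct for \emph{this} product: $x+y<1$ exactly when $s+t>m-1$. The paper implements it as Young with the pair $(\alpha_p,\beta_p)$, chosen so that the remainder is a power of $u^{s-a}|\nabla u|^t$, followed by H\"older against $J$ and absorption. The resulting constant is of order $d^{-\frac{(m-1)s+d(t-m+1)}{s+t-m+1}}$.

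That constant is what your Step 3 misses. For $t<q$ and fixed $d>0$ one has $\theta_q(d)=\frac{qs+t+d(t-q)}{s+t-q+1}<\frac{qs+t}{s+t-q+1}$, so $d$ must be tied to $r$ (the paper takes $a=1/i$ with $r=2^i$). This keeps $r^{\alpha-\theta_q(d)}$ bounded, but the constant then contributes a factor $(\ln r)^{\frac{(q-1)s}{s+t-q+1}}$. The correct count is therefore $(\ln r)^{\frac{(q-1)s}{s+t-q+1}-\theta_q+1+\beta}=(\ln r)^{\beta-\frac{q-1}{s+t-q+1}}$, and this is where the sharp logarithmic exponent in (1.7) comes from.

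Your relation $\frac{q-1}{s+t-q+1}=\theta-1$ holds only for $s=0$; in general $\theta-1=\frac{(q-1)(s+1)}{s+t-q+1}$. Likewise, the $p$-term is not automatically dominated, because for $s>0$ its constant blows up faster as $d\to 0$. One needs $\frac{qs+t}{s+t-q+1}\le\frac{ps+t}{s+t-p+1}$ and $\frac{q-1}{s+t-q+1}\le\frac{p-1}{s+t-p+1}$, both of which hold in $G_1$.

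Finally, in case (b) with $t>q$, taking $d\to 0$ gives $\theta_q(0)=q-\frac{(q-1)(t-q)}{s+t-q+1}<q$, which is incompatible with $V(r)\le Cr^q(\ln r)^{q-1}$. You must instead take $d\uparrow q-1$ (the paper uses $a=q-1-1/i$). Then $\theta_q\to q$, the constant stays bounded, and $(\ln r)^{q-1}$ is absorbed by the factor $(\ln r)^{-q}$ coming from the cutoff alone.
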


 \begin{theorem}
     For $(s,t)\in G_2$, (1.5) admits no nontrivial positive weak solution if one of the following assumptions holds:
     
     ~\\
     (a)  $(s,t)\in G_{2,a}=G_2\cap\{(s,t)|\ s\geq -1,t\geq q\}$ and there is 
  \begin{align}
       V(r)\leq C r^{q}(\mathrm{ln}\ r)^{q-1}, \quad \forall r>>1;
   \end{align}
   \\
    (b)  $(s,t)\in G_{2,b}=G_2\cap\{(s,t)|\ s<-1,t\geq p\}$ and there is 
  \begin{align}
       V(r)\leq C r^{\alpha}(\mathrm{ln}\ r)^{\alpha-1}, \quad \forall r>>1,
   \end{align}
   where $\alpha=\frac{ps+qt-(q-1)p}{s+t-p+1}$;
   \\
   (c)  $(s,t)\in G_{2,c}=G_2\cap\{(s,t)|\ s\geq -1,t<q\}$ and there is 
  \begin{align}
       V(r)\leq C r^{\alpha}(\mathrm{ln}\ r)^{\alpha-1}, \quad \forall r>>1,
   \end{align}
   where $\alpha=\frac{t}{t-p+1}(1+\frac{(q-p)(s+1)}{s+t-q+1})$.
 \end{theorem}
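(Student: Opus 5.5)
We argue by contradiction with a test-function method. Assume $u>0$ is a nontrivial weak solution of (1.5) with $(s,t)\in G_2$, so $s<0$. Take $\psi=u^{d}\varphi^{k}$ in (1.6). Here $d<0$ is a parameter to be fixed, $k$ is large, and $\varphi$ is a cutoff equal to $1$ on $B(x_0,R)$, supported in $B(x_0,2R)$, with $|\nabla\varphi|\le C/R$. This $\psi$ is admissible because $u$ is positive and $C^1$, hence bounded away from $0$ on compact sets. Expanding gives
\begin{align*}
|d|\int u^{d-1}\big(|\nabla u|^{p}+|\nabla u|^{q}\big)\varphi^k+\int u^{s+d}|\nabla u|^t\varphi^k
\le k\int u^{d}\big(|\nabla u|^{p-1}+|\nabla u|^{q-1}\big)\varphi^{k-1}|\nabla\varphi| .
\end{align*}
We split each term on the right by Young's inequality. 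One piece is absorbed into the left-hand side, weighted by the source term $u^{s+d}|\nabla u|^t$ and partly by $u^{d-1}|\nabla u|^{p}$ or $u^{d-1}|\nabla u|^{q}$. The remainder is a pure integral $\int_{B_{2R}\setminus B_R}|\nabla\varphi|^{\gamma}\varphi^{k-\gamma}$ with $\gamma$ depending on $(p,q,s,t,d)$. For this remainder to be free of $u$, the powers of $u$ and of $|\nabla u|$ must balance simultaneously. We use three-term Young inequalities, interpolating between $u^{d-1}|\nabla u|^{p}$ (or $|\nabla u|^q$) and $u^{s+d}|\nabla u|^t$. This yields two linear equations for the Hölder weights, whose solution fixes $\gamma$ and requires positivity of the weights. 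The positivity conditions are exactly what distinguishes $G_{2,a},G_{2,b},G_{2,c}$: whether $s\ge-1$ (so $d$ can be taken near $0$, or near $-(s+1)$), and how $t$ compares with $q$ and $p$.

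Concretely, in case (a), with $t\ge q$ and $s\ge -1$, the $q$-gradient term is dominant. We pair the $|\nabla u|^{q-1}$ term against $u^{d-1}|\nabla u|^q$ alone, giving $\gamma=q$, and handle the $|\nabla u|^{p-1}$ term via the source term. In case (b), with $s<-1$ and $t\ge p$, the $p$-term is the one that must be balanced with the source. This produces the exponent $\alpha=\frac{ps+qt-(q-1)p}{s+t-p+1}$. In case (c), with $t<q$, the chain of Young inequalities passes through both $u^{d-1}|\nabla u|^{q}$ and $u^{s+d}|\nabla u|^{t}$. This gives $\alpha=\frac{t}{t-p+1}\bigl(1+\frac{(q-p)(s+1)}{s+t-q+1}\bigr)$. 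In each case, the defining inequality $\frac{s}{q-1}+\frac{t}{p-1}>1$ of $G_2$ is what makes the relevant weights positive. The outcome is an estimate of the form
\begin{align*}
\int_{B_R}u^{s+d}|\nabla u|^t\,d\mu\le C\,R^{-\alpha}V(2R)
\end{align*}
with the exponent $\alpha$ of the statement ($\alpha=q$ in case (a)).

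Under pure power growth $V(r)\le Cr^{\alpha}$ this already gives boundedness of $\int_M u^{s+d}|\nabla u|^t$. The logarithmic factor $(\ln r)^{\alpha-1}$ calls for the refinement of Grigor'yan--Sun \cite{AS}. We replace the cutoff by the logarithmic one $\varphi=\min\{1,\ (\ln(R^2/r)/\ln R)_+\}$, supported in $B(x_0,R^2)$, and take $d$ depending on $R$, with $|d|\sim 1/\ln R$. Summing the volume bound over dyadic annuli gives
\begin{align*}
\int_{B_R^2\setminus B_R}|\nabla\varphi|^\alpha\,d\mu\lesssim(\ln R)^{-\alpha}\sum_j 2^{-j\alpha}V(2^{j+1})\lesssim(\ln R)^{-\alpha}\sum_j j^{\alpha-1}\lesssim 1 .
\end{align*}
The $|d|^{-\text{power}}$ factors from Young's inequality must then match. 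We then show that $\int_M u^{s}|\nabla u|^t<\infty$, obtained in the limit $d\to0$. Next we redo the estimate with the right-hand side supported only on the annulus. Hölder's inequality then bounds the right-hand side by the tail $\bigl(\int_{B_{R^2}\setminus B_R}u^{s+d}|\nabla u|^t\bigr)^{\theta}$ times a bounded quantity, and this tail tends to $0$. Hence $\int_M u^s|\nabla u|^t=0$. So $\nabla u\equiv0$, $u$ is constant, and (1.5) cannot hold nontrivially with $t>0$ in these regions.

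The main obstacle is the exponent bookkeeping in the Young splittings, especially in case (c). There the two gradient operators with different homogeneities must both be controlled by a single source term, so that the resulting power of $R$ is exactly $\alpha$ and the power of $|d|$ is compatible with the $\ln R$ choice. I would first set up the linear system for the Young exponents in general, then read off the case conditions. Only afterwards would I verify that the same $\alpha$ arises in the log exponent.
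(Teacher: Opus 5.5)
Your framework is the paper's. Your $u^{d}\varphi^{k}$ is its $u^{-a}\varphi^{b}$ with $a=-d$, your three-term Young inequality is the Young-plus-H\"older step of Lemma 2.1, and the logarithmic cutoff, dyadic summation and tail argument are those of Section 3. The gap is the choice of $d$.

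On $G_2$ one has $s<0$ and $t>p-1$. Nonnegativity of the weights when you interpolate $u^{d}|\nabla u|^{p-1}$ between $u^{d-1}|\nabla u|^{p}$ and $u^{s+d}|\nabla u|^{t}$ (conditions (2.3), rewritten as (2.12)) forces $(p-1)s+|d|(t-p+1)>0$, i.e.\ $|d|>\frac{(p-1)|s|}{t-p+1}>0$. The H\"older exponent of the $q$-term adds $|d|<q-1$, which gives exactly the interval $\frac{(1-p)s}{t-p+1}<-d<q-1$ of (2.14). So on $G_2$ the parameter $d$ can never be near $0$, and $-(s+1)$ is not a relevant value either. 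Your logarithmic refinement takes $|d|\sim 1/\ln R$ and gets $\int_M u^{s}|\nabla u|^{t}<\infty$ ``in the limit $d\to0$''; this uses a Young inequality with a negative weight.

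That is the Grigor'yan--Sun device, which the paper uses only on $G_1$ (Theorem 1.2(a), $a=i^{-1}$, $s\ge 0$). Even there it has a price. The constants carry negative powers of $|d|$, i.e.\ positive powers of $\ln R$. This is why Theorem 1.2(a) only reaches the log exponent $\frac{q-1}{s+t-q+1}$ rather than $\alpha-1=\frac{(q-1)(s+1)}{s+t-q+1}$. Under $V(r)\le Cr^{\alpha}(\ln r)^{\alpha-1}$ your cutoff integral is only $O(1)$, so it could not absorb such factors.

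The missing idea is to push $-d$ to an endpoint of that interval, which stays away from $0$. Your own description of case (a), pairing the $q$-term with $u^{d-1}|\nabla u|^{q}$ alone to get exponent $q$, already forces $d=1-q$. The paper takes $-d=q-1-i^{-1}$ in cases (a), (b) and $-d=\frac{(1-p)s}{t-p+1}+i^{-1}$ in case (c), with $i\approx\log_2 R$. These are the values where one weight degenerates:
\begin{itemize}
\item at $-d=q-1$ the $q$-term is paired with $u^{d-1}|\nabla u|^{q}$ alone, giving exponent $q$;
\item at $-d=\frac{(1-p)s}{t-p+1}$ the $p$-term is paired with the source alone by H\"older, giving exponent $\frac{t}{t-p+1}$.
\end{itemize}
Set $\theta_p=\frac{ps+t+a(t-p)}{s+t-p+1}$ and $\theta_q=\frac{qs+t+a(t-q)}{s+t-q+1}$ with $a=-d$. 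The statement's $\alpha$ is the larger, over the two endpoints, of $\min\{\theta_p,\theta_q\}$, cf.\ (3.14)--(3.18). The sign of $s+1$ decides which of $\theta_p,\theta_q$ is smaller at each endpoint. The position of $t$ relative to $q$ (for $s\ge -1$) or to $p$ (for $s<-1$) decides which endpoint wins.

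Since $|d|$ stays bounded below, all Young constants are uniformly bounded. The cutoff exponent is $\alpha+O(1/i)$, and $2^{k\,O(1/i)}=O(1)$ for $k\le 2i$. So your computation $(\ln R)^{-\alpha}\sum_j j^{\alpha-1}\lesssim 1$ by itself absorbs $(\ln r)^{\alpha-1}$. What one then proves is $\int_M u^{s+d_\infty}|\nabla u|^{t}\,d\mu=0$ with $d_\infty$ the endpoint, not $\int_M u^{s}|\nabla u|^{t}\,d\mu=0$.

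One further caution at $-d=q-1$. There the $q$-term carries no power of the source, and just off the endpoint its source exponent $\frac{q-1+d}{s+t+d}$ tends to $0$. So in the vanishing step, do not apply H\"older to that term against the source. Apply it against the retained term $|d|\int u^{d-1}|\nabla u|^{q}\varphi^{k}=(q-1)\int|\nabla\ln u|^{q}\varphi^{k}$, whose tail over the annulus does tend to $0$.
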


 \begin{theorem}
     For $(s,t)\in G_3$, (1.5) admits no nontrivial positive weak solution if one of the following assumptions holds:
     
     ~\\
    (a)  $(s,t)\in G_{3,a}=G_3\cap\{(s,t)|\ s<-1,t> q\}$ and there is 
  \begin{align}
       V(r)\leq C r^{\alpha}(\mathrm{ln}\ r)^{\alpha-1}, \quad \forall r>>1,
   \end{align}
   where $\alpha=\frac{qs+pt-(p-1)q}{s+t-q+1}$;
   \\
   (b)  $(s,t)\in G_{3,b}=G_3\cap\{(s,t)|\ s\geq  -1,t<p\}$ and there is 
  \begin{align}
       V(r)\leq C r^{\alpha}(\mathrm{ln}\ r)^{\alpha-1}, \quad \forall r>>1,
   \end{align}
   where $\alpha=\frac{t}{t-q+1}(1+\frac{(p-q)(s+1)}{s+t-p+1})$;
   \\
   (c)  $(s,t)\in G_{3,c}=G_3\cap\{(s,t)|\ s< -1,t\leq q\}$ and there is 
  \begin{align}
       V(r)\leq C r^{\frac{t}{t-q+1}}(\mathrm{ln}\ r)^{\frac{q-1}{t-q+1}}, \quad \forall r>>1.
   \end{align}
 \end{theorem}
    \begin{theorem}  For $(s,t)\in G_4$, (1.5) admits no nontrivial positive weak solution if one of the following assumptions holds:
    
        ~\\ (a)  $(s,t)\in G_{4,a}=G_4\cap \{t=q-1\}$ and there exists a constant $\alpha>0$ such that
        \begin{align}
            V(r)\leq Cr^\alpha,\quad \forall r>>1;
        \end{align}     
        (b)  $(s,t)\in G_{4,b}=G_4\cap \{t<q-1\}$ and there exists a constant $0<\kappa<\mathrm{min}\{\frac{t-p+1}{s+t-p+1},\frac{t-q+1}{s+t-q+1}\}$ such that
        \begin{align}
            V(r)\leq C e^{\kappa r\mathrm{ln}\ r},\quad \forall r>>1.
        \end{align} 
    \end{theorem}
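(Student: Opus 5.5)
We argue by contradiction: let $u>0$ be a weak solution of (1.5) with $(s,t)\in G_4$. Since $t\le q-1<p-1$ and $s<q-t-1$, the natural test functions are $\psi=u^{d}\varphi^{k}$ with $d<0$ (well defined because $u>0$ and $u\in C^1$), and $\varphi$ a cutoff with $\varphi=1$ on $B(x_0,r)$, $\varphi=0$ outside $B(x_0,2r)$, $|\nabla\varphi|\le C/r$. Inserting $\psi$ into (1.6) and using $d<0$, the terms $|d|\int u^{d-1}|\nabla u|^{p}\varphi^k$ and $|d|\int u^{d-1}|\nabla u|^{q}\varphi^k$ appear with a good sign. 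Together with $\int u^{s+d}|\nabla u|^t\varphi^k$, they are bounded by the cross terms $k\int u^d|\nabla u|^{p-1}\varphi^{k-1}|\nabla\varphi|$ and the analogous $q$-term.

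The key step exploits that $t\le q-1\le p-1$, so the reaction term is of \emph{lower} order in $|\nabla u|$ than the diffusion terms. Take the cross term $u^d|\nabla u|^{p-1}\varphi^{k-1}|\nabla\varphi|$. Write $|\nabla u|^{p-1}=|\nabla u|^{\theta p}|\nabla u|^{(1-\theta)t}$ with $\theta p+(1-\theta)t=p-1$, so $\theta=(p-1-t)/(p-t)\in(0,1)$. Young's inequality with exponents $1/\theta,\,1/(1-\theta)$ then absorbs part of the cross term into $\varepsilon u^{d-1}|\nabla u|^p\varphi^k+\varepsilon u^{s+d}|\nabla u|^t\varphi^k$. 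Matching the powers of $u$ leaves a remainder in which the power of $u$ vanishes identically, namely
$$\theta(d-1)+(1-\theta)(s+d)=d\;\Longleftrightarrow\;-\theta+(1-\theta)s=0,$$
which generally fails. The remainder is therefore of the form $C\varepsilon^{-\cdot}u^{\gamma}|\nabla\varphi|^{\cdot}$. A cleaner route is a three-factor Young inequality splitting $u^d|\nabla u|^{p-1}|\nabla\varphi|$ among the two absorbed quantities and a pure $|\nabla\varphi|^{m}$ term. The exponents exist precisely because $s<q-t-1$ (respectively $s<p-t-1$, which is weaker), and the choice of $d$ is free and tuned to kill the residual power of $u$. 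Doing the same for the $q$-term yields
$$\int_{B_r}u^{s+d}|\nabla u|^t\le C\int_{B_{2r}\setminus B_r}\bigl(r^{-m_p}+r^{-m_q}\bigr)\le C\,r^{-m}V(2r).$$

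\emph{Case (a), $t=q-1$.} The $q$-cross term has the same gradient homogeneity as the reaction term, so no pure gradient exponent is needed there. One obtains instead an estimate in which the $|\nabla\varphi|$ power can be taken arbitrarily large: choose $\varphi=\varphi_0^{k}$ with $k$ large, or iterate over dyadic annuli, so that the bound reads $C r^{-m}V(2r)$ with $m$ arbitrarily large. With $V(r)\le Cr^\alpha$, fixing $m>\alpha$ and letting $r\to\infty$ gives $\int_M u^{s+d}|\nabla u|^t=0$. Hence $\nabla u\equiv0$ where $t\ge0$, or directly a contradiction, so $u$ is constant; a positive constant is not a solution unless $t>0$. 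This must be reconciled with the nontriviality convention, namely that nontrivial means nonconstant.

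\emph{Case (b), $t<q-1$.} Polynomial decay is not enough, and the exponential volume growth $e^{\kappa r\ln r}$ suggests test functions depending on $r$ with $k\sim r$. I would use $\varphi$ with $|\nabla\varphi|\le 1/r$ but raise it to the power $k=r$, or alternatively use the exponential cutoff $\varphi=e^{-\lambda d(x_0,x)}$ times a cutoff. This produces a factor like $r^{-c r}$ with $c=\min\{\tfrac{t-p+1}{s+t-p+1},\tfrac{t-q+1}{s+t-q+1}\}$ coming from the Young exponents. Then $r^{-cr}e^{\kappa r\ln r}\to0$ since $\kappa<c$.

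The main obstacle is this bookkeeping: tracking how $k$ enters the Young constants (factors $k^{m}$) and making sure the decay exponent is exactly $c$. To handle it, I would first redo the Young step with explicit dependence on $k$ and $\varepsilon$, then optimize $d$ and $k\approx r$ at the end.
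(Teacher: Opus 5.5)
Your framework matches the paper's. The test function $u^{d}\varphi^{k}$ is the paper's $u^{-a}\varphi^{b}$ with $a=-d$, and your three-factor Young inequality is what Lemma 2.1 does in two steps (Young, then H\"older). It yields a remainder $C\sum_{\ell\in\{p,q\}}(2b)^{m_\ell}a^{-\gamma_\ell}\int_M|\nabla\varphi|^{m_\ell}$, where
\[
m_\ell=\frac{\ell s+t+a(t-\ell)}{s+t-\ell+1},\qquad \gamma_\ell=\frac{(\ell-1)s+a(t-\ell+1)}{s+t-\ell+1}.
\]
The gap is that you misidentify where the decay comes from, in both cases.

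In (a), once $d$ is fixed, the power $m_\ell$ of $|\nabla\varphi|$ is fixed by homogeneity. Replacing $\varphi$ by $\varphi_0^{k}$ only multiplies the remainder by $k^{m_\ell}$ and leaves the rate $r^{-m_\ell}$ unchanged. Dyadic averaging gains only powers of $\ln r$. What makes the exponent arbitrarily large is $a=-d\to\infty$. In $G_4$ both $\frac{t-p}{s+t-p+1}$ and $\frac{t-q}{s+t-q+1}$ are positive, so $m_p$ and $m_q$ grow linearly in $a$. One fixes $a$ with $\min\{m_p,m_q\}>\alpha$ and then lets $R\to\infty$. For $t=q-1$ one has $\gamma_q\equiv q-1$, which is exactly why (a) cannot go beyond polynomial volume growth.

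In (b), your mechanism produces no decay at all. With $d$ fixed and $k\approx r$, the factor $k$ from $\nabla(\varphi^{k})$ cancels $|\nabla\varphi|\le C/r$, so the remainder is only $\lesssim C^{m}V(2r)$. An exponential cutoff with a fixed rate also loses against superexponential volume.

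In the paper, the factor $R^{-cR}$ comes from the Young \emph{constant} $a^{-\gamma_\ell}$. That is, it comes from the coefficient $a$ in front of the absorbed terms $a\int u^{-a-1}|\nabla u|^{\ell}\varphi^{b}$, and $\gamma_\ell\sim a\,\frac{t-\ell+1}{s+t-\ell+1}$. So the exponent of $u$ must scale with $R$: the paper takes $a=2R$. The choice $b=c_1R$ is needed only to keep $b>\max\{m_p,m_q\}$, and the resulting $(2b)^{m_\ell}R^{-m_\ell}=C^{R}$ is harmless.

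Because the weight $u^{s-a}$ then changes with $R$, you also need a fixed positive quantity to contradict. The paper takes $W=\{0<u\le k_0,\ |\nabla u|>0\}$, which has positive measure, and uses
\[
\int_{W\cap B_R}|\nabla u|^t\le k_0^{a-s}\int_M u^{s-a}|\nabla u|^t\varphi_R^{b}.
\]
The factor $k_0^{2R}$ is absorbed into $C^R=e^{o(R\ln R)}$. Without $a\sim R$ and this level-set step, the hypothesis $\kappa<\min\{\frac{t-p+1}{s+t-p+1},\frac{t-q+1}{s+t-q+1}\}$ never enters your argument.
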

    In the case of Euclidean space $\mathbb{R}^n$, we have the following corollary.
    \begin{corollary}
        Let $M=\mathbb{R}^n$ with standard metric, then (1.5) possesses no nontrivial positive solution if one of the following holds:
        \\
        (1) $(s,t)\in G_{1,a}\cap\{(s,t)|\ n\leq \frac{qs+t}{s+t-q+1}\}$;
        \\
        (2) $(s,t)\in (G_{1,b}\bigcup G_{2,a})\cap\{(s,t)|\ n\leq q\}$;
        \\
        (3) $(s,t)\in G_{2,b}\cap\{(s,t)|\ n\leq \frac{ps+qt-(q-1)p}{s+t-p+1}\}$;
        \\
        (4) $(s,t)\in G_{2,c}\cap\{(s,t)|\ n\leq \frac{t}{t-p+1}(1+\frac{(q-p)(s+1)}{s+t-q+1})\}$;
        \\
        (5) $(s,t)\in G_{3,a}\cap\{(s,t)|\ n\leq \frac{qs+pt-(p-1)q}{s+t-q+1}\}$;
        \\
        (6) $(s,t)\in G_{3,b}\cap\{(s,t)|\ n\leq \frac{t}{t-q+1}(1+\frac{(p-q)(s+1)}{s+t-p+1})\}$;
        \\
        (7) $(s,t)\in G_{3,c}\cap\{(s,t)|\ n\leq \frac{t}{t-q+1}\}$;
        \\
        (8) $(s,t)\in G_{4} $.
        \end{corollary}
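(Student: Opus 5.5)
The plan is to read off each item of the Corollary from Theorems 1.1--1.4 by inserting the Euclidean volume $V(r)=\omega_n r^n$, where $\omega_n$ is the volume of the unit ball. Every hypothesis in those theorems has the form $V(r)\le C r^{\beta}(\ln r)^{\gamma}$ for $r\gg1$, with a specific $\beta$ and a specific $\gamma\ge 0$. Item by item, $\beta$ equals $\frac{qs+t}{s+t-q+1}$, $q$, $\frac{ps+qt-(q-1)p}{s+t-p+1}$, the two $\alpha$'s of Theorems 1.2(c) and 1.3(b), $\frac{qs+pt-(p-1)q}{s+t-q+1}$, and $\frac{t}{t-q+1}$. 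The key observation is that if $n\le\beta$ and $\gamma\ge0$, then $\omega_n r^n\le \omega_n r^{\beta}\le \omega_n r^\beta(\ln r)^\gamma$ as soon as $r\ge e$. So the volume hypothesis holds with $C=\omega_n$, and the matching theorem rules out nontrivial positive weak solutions.

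The only real check is that $\gamma\ge 0$ in every case. In Theorem 1.1(a) we have $\gamma=\frac{q-1}{s+t-q+1}$. On $G_1$ we have $t>p-s-1\ge q-s-1$, so $s+t-q+1>0$ and $\gamma>0$. In (1.8) and (1.9) the exponent is $q-1>0$. In the cases with exponent $\alpha-1$ (items (3)--(6)), there are two possibilities. If $\alpha\ge1$, then $\gamma=\alpha-1\ge0$ and we are done. If $\alpha<1$, the condition $n\le \alpha$ cannot hold for any $n\ge1$, so that part of the statement is vacuous. For item (7) we have $\gamma=\frac{q-1}{t-q+1}$, and $t>q-1$ on $G_3$ makes this positive. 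For item (2), I would use Theorem 1.1(b) on $G_{1,b}$ and Theorem 1.2(a) on $G_{2,a}$, both with $\beta=q$.

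Item (8) needs the most care, because $G_4$ is split into $t=q-1$ and $t<q-1$. When $t=q-1$, Theorem 1.4(a) applies directly with $\alpha=n$. When $t<q-1$, we need $\omega_n r^n\le C e^{\kappa r\ln r}$ for some admissible $\kappa>0$. Any $\kappa>0$ gives $e^{\kappa r\ln r}=r^{\kappa r}$, and $r^{\kappa r}\ge r^n$ once $\kappa r\ge n$. So the only obstacle is to confirm that the interval $0<\kappa<\min\{\frac{t-p+1}{s+t-p+1},\frac{t-q+1}{s+t-q+1}\}$ is nonempty.

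For this, note that $t<q-1\le p-1$ gives $t-q+1<0$ and $t-p+1<0$. Next, $s<q-t-1$ gives $s+t-q+1<0$, and $s+t-p+1\le s+t-q+1<0$ because $q\le p$. Hence both fractions are quotients of negative numbers, so both are positive, their minimum is positive, and a suitable $\kappa$ exists. This completes the reduction. No analysis beyond Theorems 1.1--1.4 is needed; the work is only the sign bookkeeping above.
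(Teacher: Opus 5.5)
Your proposal is correct and matches the paper's implicit argument: the paper states Corollary 1.6 without proof as the direct specialization of its volume-growth theorems to $V(r)=\omega_n r^n$. Your sign checks fill in exactly the omitted details, namely that $n\le\alpha$ forces $\alpha-1\ge 0$ and that the admissible $\kappa$-interval for $G_{4,b}$ is nonempty; the only slip is cosmetic, as in the paper's numbering the theorems you invoke are Theorems 1.2--1.5 (Definition 1.1 shares the counter).
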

        \begin{remark}
        ~\\
            (1) We point out that the condition $n\leq \frac{qs+t}{s+t-q+1} $ is equivalent to the condition (1.8) in \cite{BBF} when $s+t-q+1>0$. Hence the corollary 1.6 extends the Liouville results in \cite{BBF}. For example, the (1) and (2) in Corollary 1.6 improve the Theorem 1.8 in \cite{BBF}. However, since the proof relies heavily on Lemma 2.1, we cannot obtain the Liouville theorem for all $(s,t)\in \mathbb{R}^2$, such as $(s,t)\in \{ q-1\leq s+t\leq p-1\}$. 
            \\
            (2) Different from \cite{BBF}, the higher order term, that is $p-$Laplace operator, has a stronger influence on the structure of the analysis when $s<0$.
            \\
            (3) From the proof of the Theorem and the approach in \cite{VGM}, we can obtain Liouville type results for a wider class of inequalities of the type
            \begin{align*}
                   \mathrm{div}(|\nabla u|^{q-2}f(|\nabla u|)\nabla u)+u^s|\nabla u|^t\leq 0,\quad in\  M.
               \end{align*}
               Here $f$ satisfies the condition that there exists constants $b\geq a>0,K\geq 0$ and $v> 0$ such that
               \begin{align*}
                   at^{v}\leq f(t) \leq K+bt^v, \quad \forall t\geq 0.
               \end{align*}
        \end{remark}
       Finally, we present some examples to illustrate sharpness in certain cases.
       \begin{theorem}
           There exists a geodesically complete noncompact Riemannian manifold $M$ such that the inequality (1.5) has a nontrivial positive solution if the volume $V(r)$ satisfies one of the following conditions for sufficiently large $r$: 
           
           ~\\
           (1) $(s,t)\in G_{1,a}$ and $V(r)\leq C r^{\frac{qs+t}{s+t-q+1}}(\mathrm{ln}\ r)^{\frac{q-1}{s+t-q+1}+\epsilon} $ for  $\epsilon>0$;
   
   ~\\
   (2) $(s,t)\in G_{1,b}\bigcup G_{2,a}$ and $V(r)\leq C r^{q}(\mathrm{ln}\ r)^{q-1+\epsilon}$ for $\epsilon>0$; 
  
   ~\\
   (3) $(s,t)\in G_{3,c}\cap \{t>p-1\}$ and $V(r)\leq C r^{\frac{t}{t-q+1}}(\mathrm{ln}\ r)^{\frac{q-1}{t-q+1}+\epsilon}$  for $\epsilon>0$;
   
   ~\\ (4) $(s,t)\in G_{4,a}$ and $V(r)\leq Ce^{\lambda r} $ for $\lambda>0$;   

   ~\\
        (5) $(s,t)\in G_{4,b}$ and $V(r)\leq C e^{\lambda r^l\mathrm{ln}\ r}$ for $l>2(q-t-1)+1,\lambda>0$.
       \end{theorem}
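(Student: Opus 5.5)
We will use model manifolds. Let $M=\mathbb{R}^n$ as a topological space, carrying a rotationally symmetric metric $dr^2+\sigma(r)^2d\theta^2$ with $\sigma$ smooth, $\sigma(0)=0$, $\sigma'(0)=1$. For a given volume function, choose $\sigma$ near the origin to match the flat metric and, for $r\geq R_0$, set $S(r)=\omega_{n-1}\sigma^{n-1}(r)=V'(r)$. Here $V$ is the prescribed profile in each case (1)--(5), chosen so that the volume bound holds. Any complete rotationally symmetric metric of this kind is geodesically complete and noncompact.

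For a radial function $u=u(r)$ we have
\begin{align*}
\Delta_pu=\frac{1}{S}\big(S|u'|^{p-2}u'\big)'.
\end{align*}
We look for decreasing $u>0$ and write $\phi=-u'>0$. Then (1.5) becomes
\begin{align*}
-\frac{1}{S}\big(S(\phi^{p-1}+\phi^{q-1})\big)'+u^s\phi^t\leq 0.
\end{align*}
Near the origin the metric is flat, so we take $u$ smooth with $u'(0)=0$ and check the inequality there directly, for example with $u=A-\delta r^2$ on a small ball. On $r\geq R_0$ we make an ansatz.

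\textbf{Cases (1), (2), (3): polylogarithmic profiles.} Take
\begin{align*}
S(r)=r^{a-1}(\ln r)^{b}
\end{align*}
with $a$ and $b$ equal to the critical exponents, $b$ increased by $\epsilon$. Let $u$ be $c(\ln r)^{-\gamma}$ or $c\,r^{-\beta}(\ln r)^{-\gamma}$, whichever fits. Here $\beta$ is chosen so that the dominant $q$-Laplacian term and the source term have the same power of $r$, and the extra $(\ln r)^{\epsilon}$ in $S$ produces a positive leftover in $-(S\phi^{q-1})'/S$.

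In case (2) the exponent is $V\sim r^q$, i.e. $a=q$. Then $S\phi^{q-1}$ with $\phi\sim r^{-1}(\ln r)^{-\gamma-1}$ gives
\begin{align*}
S\phi^{q-1}\sim (\ln r)^{b-(\gamma+1)(q-1)}.
\end{align*}
This is decreasing provided $b<(\gamma+1)(q-1)$, and its derivative is of order $r^{-1}(\ln r)^{b-(\gamma+1)(q-1)-1}$. We then compare with $u^s\phi^t$. Since $t\geq q$ and $\phi$ is small, this term carries a factor $r^{-t}\ll r^{-q}$ and is therefore dominated. The constant $c$ and the exponent $\gamma$ are tuned to close this comparison.

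The $p$-Laplacian contribution $S\phi^{p-1}$ is lower order because $\phi\to0$ and $p\geq q$. Its derivative must also be negative (decreasing), which the ansatz gives; otherwise it is absorbed.

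For $s<0$, that is in $G_{2,a}$ and $G_{3,c}$, the factor $u^s$ grows when $u\to0$. Hence in case (3) we instead pick $u$ bounded below, e.g.
\begin{align*}
u=c_1+c(\ln r)^{-\gamma},
\end{align*}
which is what the exponent $\frac{t}{t-q+1}$ (independent of $s$) suggests. For $G_{2,a}$ with $s\geq -1$ we can use the same device.

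\textbf{Cases (4) and (5): exponential profiles.} Take $S=e^{\lambda r}$, respectively $S=e^{\lambda r^l\ln r}$ (up to polynomial factors), and $\phi=$ const or $\phi=e^{-\mu r}$. In $G_{4,a}$ we have $t=q-1$. With $\phi$ constant small, $-(S\phi^{q-1})'/S=-\lambda\phi^{q-1}$, and this beats $u^s\phi^{q-1}$ once $u^s<\lambda$. That requires $u$ large where $s<0$, or small where $s>0$. Since $s<0$ in $G_4$ when $t=q-1$, we take $u$ to be a large constant plus a slowly decaying term. Similarly for (5), with the condition $l>2(q-t-1)+1$ ensuring that $S'/S\sim l\lambda r^{l-1}\ln r$ dominates $u^s\phi^{t-q+1}$ for a suitable polynomially decaying $\phi$.

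\textbf{Main obstacle.} The hardest step is the simultaneous handling of both operators together with sign conditions for $s<0$ in cases (2)--(3), namely keeping $u$ positive, decreasing and bounded below while retaining enough decay of $\phi$. We first try $\phi=r^{-1}(\ln r)^{-\gamma-1}$ with $u=1+\int_r^\infty\phi$ and verify all the exponent inequalities. We also need the gluing at $R_0$, which we handle by smoothly interpolating $\sigma$ and enlarging constants.
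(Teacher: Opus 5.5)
You have the same architecture as the paper: a rotationally symmetric model carrying the critical profile plus an extra $(\ln r)^{\epsilon}$, a radial decreasing $u$, the $p$-Laplacian treated as a perturbation, and $u$ bounded below when $s<0$. The central computation, however, has the wrong sign.

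By your own reformulation, (1.5) requires $\frac{1}{S}\big(S(\phi^{p-1}+\phi^{q-1})\big)'\geq u^s\phi^t>0$, so the $q$-flux $S\phi^{q-1}$ must be \emph{increasing}. You instead impose that it decreases ($b<(\gamma+1)(q-1)$) and aim for a ``positive leftover in $-(S\phi^{q-1})'/S$''. That makes $\Delta_q u>0$; with $\Delta_p u\geq 0$ as well for large $r$ and a positive source, the left side of (1.5) is positive. In case (4) you use the correct sign, so the proposal is internally inconsistent.

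The correct regimes are as follows. In case (2) you need $q-1+\epsilon=b>(\gamma+1)(q-1)$, i.e.\ $0<\gamma<\epsilon/(q-1)$. In case (1), with $u\sim r^{-\beta}(\ln r)^{-\gamma}$ and $\beta=\frac{q-t}{s+t-q+1}$, you need $\frac{1}{s+t-q+1}<\gamma<\frac{1}{s+t-q+1}+\frac{\epsilon}{q-1}$. The lower bound makes the source of lower order, and the upper bound keeps $S\phi^{q-1}\sim(\ln r)^{b-\gamma(q-1)}$ increasing; this window is exactly what $\epsilon$ buys. The paper builds it in by prescribing $\phi$ so that $S\phi^{q-1}$ is a pure power of $\ln r$ with positive exponent.

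Similarly, for $p>q$ the $p$-flux $S\phi^{p-1}$ \emph{decreases}, so $\Delta_p u>0$ is an unfavorable term. It is harmless only because it is smaller than the $q$-term by a factor $r^{-(a-1)(p-q)/(q-1)}$ (with $a$ your volume exponent), not because of its sign. Also, at $t=q$ your claim $r^{-t}\ll r^{-q}$ is false, and the logarithmic powers must be compared.

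Three further steps fail as written.

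\emph{Case (3).} The ansatz $u=c_1+c(\ln r)^{-\gamma}$ gives $\phi\sim r^{-1}$, while $S\sim r^{a-1}$ with $a=\frac{t}{t-q+1}>q$ whenever $t<q$. Then $\frac{1}{S}(S\phi^{q-1})'\sim r^{-q}$ but $u^s\phi^t\gtrsim r^{-t}\gg r^{-q}$. You need $\phi\sim r^{-1/(t-q+1)}(\ln r)^{-\gamma}$, which is what the paper does. It takes the solution $v$ of the $s=0$ problem from case (1) and sets $f=1+v$, so that $f^s\leq 1$. Note that $(0,t)\in G_1$ exactly because $t>p-1$, a hypothesis your proposal never uses.

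\emph{Near the origin.} The choice $u=A-\delta r^2$ fails when $t<q-2$, which occurs in $G_{1,a}$ for large $s$ and throughout parts of $G_{4,b}$. There $u^s|\nabla u|^t\sim r^t$ blows up faster than $|\Delta_q u|\sim r^{q-2}$, and for $t\leq -n$ it is not even in $L^1_{loc}$. The paper uses $1-(r/R)^{\gamma+1}$ on all of $[0,R_0]$, with $\gamma(q-t-1)<1$ and $\gamma<n/|t|$ if $t<0$; this needs only $S'\geq 0$. It then chooses $R$ so that $u'/u$ matches at $R_0$, giving a $C^1$ function.

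\emph{Normalization.} After this matching, the inner piece satisfies the inequality only with a small coefficient $\lambda$ on the source. Since $\Delta_p+\Delta_q$ is not homogeneous, ``enlarging constants'' does not fix this. The paper proves $\Delta_q u+k\Delta_p u+\lambda_0u^s|\nabla u|^t\leq 0$ for every $k\in(0,1]$. It then rescales $u=\lambda_0^{-1/(s+t-q+1)}v$ with $k=\lambda_0^{(p-q)/(s+t-q+1)}$, which uses $s+t>q-1$.
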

       \par
           The rest of the paper is organized as follows. In Section 2, we prove some preliminary results, which will be used in the proof. In Section 3, we prove the main results. In section 4, we give some examples.

      \section{Preliminary}
      In order to prove Theorem 1.2-Theorem 1.5, we first present a useful lemma. 
      \begin{lemma}
          Assume that $ s+t\notin \{ p-1,q-1\}$ and $u$ is a no nontrivial positive weak solution of (1.5). Then there exist a positive pair $(a,b)$ and a constant $C>0$ such that for any $0\leq \varphi \leq 1,\varphi\in W^{1,p}_{loc}(M)$ with compact support, one has  
          \begin{align}
              &\int_M u^{s-a}|\nabla u|^t\varphi^bd\mu 
              \nonumber\\
              \leq & C(2b)^{\frac{ps+t+a(t-p)}{s+t-a}}a^{-\frac{(p-1)s+a(t-p+1)}{s+t-a}}\left(\int_M|\nabla \varphi|^{\frac{ps+t+a(t-p)}{s+t-p+1}}d\mu \right)^{\frac{s+t-p+1}{s+t-a}} \nonumber\\
              & \cdot \left(\int_{supp |\nabla \varphi|} u^{s-a}|\nabla u|^t\varphi^bd\mu \right)^{\frac{p-a-1}{s+t-a}}
              \nonumber\\
              & +C(2b)^{\frac{qs+t+a(t-q)}{s+t-a}}a^{-\frac{(q-1)s+a(t-q+1)}{s+t-a}}\left(\int_M|\nabla \varphi|^{\frac{qs+t+a(t-q)}{s+t-q+1}}d\mu \right)^{\frac{s+t-q+1}{s+t-a}} \nonumber\\
              & \cdot \left(\int_{supp |\nabla \varphi|} u^{s-a}|\nabla u|^t\varphi^bd\mu \right)^{\frac{q-a-1}{s+t-a}}
          \end{align}
          and 
          \begin{align}
              &\int_M u^{s-a}|\nabla u|^t\varphi^bd\mu 
              \nonumber\\
              \leq& C^{\frac{s+t-a}{s+t-p+1}}(2b)^{\frac{ps+t+a(t-p)}{s+t-p+1}}a^{-\frac{(p-1)s+a(t-p+1)}{s+t-p+1}}\int_M|\nabla \varphi|^{\frac{ps+t+a(t-p)}{s+t-p+1}}d\mu 
              \nonumber\\
              &+ C^{\frac{s+t-a}{s+t-q+1}}(2b)^{\frac{qs+t+a(t-q)}{s+t-q+1}}a^{-\frac{(q-1)s+a(t-q+1)}{s+t-q+1}}\int_M|\nabla \varphi|^{\frac{qs+t+a(t-q)}{s+t-q+1}}d\mu 
          \end{align}
          if $a,b$ satisfy that
          \begin{align}  
    \begin{cases}
        \frac{ps+t+a(t-p)}{s+t-a}>1, \quad  \frac{s+t-a}{p-a-1}>1; \\
           \\
           \frac{qs+t+a(t-q)}{s+t-a}>1,\quad  \frac{s+t-a}{q-a-1}>1;\\
           \\
           b>\mathrm{max}\{\frac{ps+t+a(t-p)}{s+t-a}, \frac{qs+t+a(t-q)}{s+t-a}\}.
    \end{cases}   
\end{align}
Here the constant $C$ does not depend on $a$.
      \end{lemma}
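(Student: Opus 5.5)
We test the weak formulation (1.6) with $\psi=u^{-a}\varphi^b$, where $a>0$ and $b$ large. Since $u>0$ is $C^1$ and $\varphi$ has compact support, $u$ is bounded below by a positive constant on $\mathrm{supp}\,\varphi$. Hence $\psi\in W^{1,p}\cap L^\infty$ with compact support, and it is admissible. (If needed, replace $u$ by $u+\delta$ and let $\delta\to0$.) We have
\begin{align*}
\nabla\psi=-a u^{-a-1}\varphi^b\nabla u+bu^{-a}\varphi^{b-1}\nabla\varphi .
\end{align*}
Inserting this into (1.6) gives
\begin{align*}
&a\int_M u^{-a-1}\big(|\nabla u|^p+|\nabla u|^q\big)\varphi^b d\mu+\int_M u^{s-a}|\nabla u|^t\varphi^b d\mu\\
\le\ & b\int_M u^{-a}\big(|\nabla u|^{p-1}+|\nabla u|^{q-1}\big)\varphi^{b-1}|\nabla\varphi| d\mu .
\end{align*}
The plan is to bound each right-hand term, say the $p$-term, by a three-factor Hölder inequality. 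The integrand $u^{-a}|\nabla u|^{p-1}\varphi^{b-1}|\nabla\varphi|$ is written as a product of three pieces:
\begin{itemize}
\item a power of $u^{-a-1}|\nabla u|^p\varphi^b$, which is absorbed by the left side;
\item a power of $u^{s-a}|\nabla u|^t\varphi^b$, restricted to $\mathrm{supp}|\nabla\varphi|$;
\item a pure power of $|\nabla\varphi|$ times leftover powers of $\varphi$.
\end{itemize}
Concretely, we seek exponents $\theta_1,\theta_2,\theta_3$ with $\theta_1+\theta_2+\theta_3=1$ such that $u^{-a}|\nabla u|^{p-1}=(u^{-a-1}|\nabla u|^p)^{\theta_1}(u^{s-a}|\nabla u|^t)^{\theta_2}$. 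Matching the powers of $u$ and of $|\nabla u|$ gives two linear equations:
\begin{align*}
-a=-(a+1)\theta_1+(s-a)\theta_2,\qquad p-1=p\theta_1+t\theta_2 .
\end{align*}
Solving, $\theta_2=\frac{p-a-1}{s+t-a}$, which is exactly the exponent appearing in (2.1). The resulting $\theta_3$ gives the exponent $\frac{ps+t+a(t-p)}{s+t-p+1}$ on $|\nabla\varphi|$ after normalization.

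Conditions (2.3) are precisely what makes the Hölder exponents admissible: they must all lie in $(0,1)$. The condition on $b$ guarantees that the power of $\varphi$ left on the third factor is nonnegative, so it can be bounded by $1$ using $0\le\varphi\le1$. The standing assumption $s+t\ne p-1,q-1$ avoids degenerate denominators.

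We then apply Young's inequality with $\varepsilon\sim a/(2b)$ to the first factor, so that it is absorbed into $a\int u^{-a-1}|\nabla u|^p\varphi^b$ on the left side. Tracking how $a$ and $b$ enter through Young's constant produces the factors $(2b)^{\cdots}a^{-\cdots}$ in (2.1). The constant $C$ depends only on $p,q,s,t$, not on $a$, provided $a$ ranges in the region allowed by (2.3). The $q$-term is handled identically with $p$ replaced by $q$. This yields (2.1).

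For (2.2), write $I=\int_M u^{s-a}|\nabla u|^t\varphi^b d\mu$, and bound the integral over $\mathrm{supp}|\nabla\varphi|$ by $I$. Then (2.1) reads
\begin{align*}
I\le A\,I^{\theta}+B\,I^{\eta},\qquad \theta=\frac{p-a-1}{s+t-a}<1,\quad \eta=\frac{q-a-1}{s+t-a}<1 .
\end{align*}
(These are exactly the conditions $\frac{s+t-a}{p-a-1}>1$ and $\frac{s+t-a}{q-a-1}>1$.) Since $I<\infty$ by local integrability and compact support, either $I\le 2AI^\theta$ or $I\le 2BI^\eta$. In the first case $I\le(2A)^{1/(1-\theta)}$, and $1/(1-\theta)=\frac{s+t-a}{s+t-p+1}$. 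This exponent, applied to $A$, gives exactly the powers in (2.2). The second case is treated in the same way, and summing the two bounds gives (2.2).

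The main obstacle will be the exponent bookkeeping in the three-way Hölder/Young step. One must check that the positivity and range requirements on all three exponents reduce exactly to (2.3), and that the powers of $a$ come out as stated, uniformly in $a$. If the signs of $\theta_1,\theta_2$ fail in some range, for instance when $p-a-1<0$, I would instead use a two-step Young inequality: first split off the $|\nabla u|^p$ term, and then interpolate the remainder against $u^{s-a}|\nabla u|^t$.
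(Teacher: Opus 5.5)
Your strategy is the paper's: test (1.6) with $u^{-a}\varphi^b$, absorb the $u^{-a-1}|\nabla u|^p$ term by Young, and split off a power of the solution integral by H\"older. Doing a three-factor H\"older first and Young second is equivalent to the paper's Young-then-H\"older, and it gives the same exponents and constants. Your ``either $I\le 2AI^\theta$ or $I\le 2BI^\eta$'' argument for (2.2) is correct. It also matches the two-term form of (2.1) more directly than the elementary inequality quoted in the paper. Two pieces of your bookkeeping are wrong, however.

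\textbf{The value of $\theta_2$.} The matching equations do not give $\theta_2=\frac{p-a-1}{s+t-a}$. Solving them gives
\begin{align*}
\theta_1=\frac{(p-1)s+a(t-p+1)}{ps+t+a(t-p)},\quad \theta_2=\frac{p-a-1}{ps+t+a(t-p)},\quad \theta_3=\frac{s+t-p+1}{ps+t+a(t-p)}.
\end{align*}
The exponents $\frac{p-a-1}{s+t-a}$ and $\frac{s+t-p+1}{s+t-a}$ in (2.1) appear only after the Young step, as $\theta_2/(1-\theta_1)$ and $\theta_3/(1-\theta_1)$, because $1-\theta_1=\frac{s+t-a}{ps+t+a(t-p)}$. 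With these correct values, (2.3) does put every $\theta_i$ in $(0,1)$.

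\textbf{The condition on $b$.} This point is more substantive. After H\"older, the leftover power of $\varphi$ in the third factor is $\varphi^{\,b-1/\theta_3}$, where
\begin{align*}
\frac{1}{\theta_3}=\frac{ps+t+a(t-p)}{s+t-p+1}.
\end{align*}
This is strictly larger than $\frac{ps+t+a(t-p)}{s+t-a}$, because $\frac{s+t-a}{s+t-p+1}>1$ under (2.3). So the hypothesis $b>\max\{\frac{ps+t+a(t-p)}{s+t-a},\frac{qs+t+a(t-q)}{s+t-a}\}$ does not make this power nonnegative. For $b$ between the two thresholds, $\varphi^{\,b-1/\theta_3}$ is unbounded where $\varphi\to 0$ while $\nabla\varphi\neq 0$, so it cannot be bounded by $1$. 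The argument actually needs
\begin{align*}
b\ge\max\Big\{\frac{ps+t+a(t-p)}{s+t-p+1},\ \frac{qs+t+a(t-q)}{s+t-q+1}\Big\}.
\end{align*}
The paper's proof has the identical slip: after (2.9) it discards $\varphi^{\,b-\beta_p\rho_p}$ citing only $b>\max\{\beta_p,\beta_q\}$. The slip is harmless for the theorems, since $b$ is always chosen as a free large parameter; in Theorem 1.5(b), $b=c_1R$ with $c_1$ large suffices. Still, your claim that the stated condition on $b$ ``guarantees'' nonnegativity is false, and the lemma should be stated with the stronger condition above.
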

      \begin{proof}
        Taking $\psi=u^{-a}\varphi^b$ in (1.6), then we have
               \begin{align}
                 &\int_Mu^{s-a}|\nabla u|^t\varphi^bd\mu +a\int_Mu^{-a-1}|\nabla u|^p\varphi^bd\mu +a\int_Mu^{-a-1}|\nabla u|^q\varphi^bd\mu
                 \nonumber\\
                 \leq &b\int_Mu^{-a}|\nabla u|^{p-2}\varphi^{b-1}\langle \nabla u,\nabla\varphi\rangle d\mu+b\int_Mu^{-a}|\nabla u|^{q-2}\varphi^{b-1}\langle \nabla u,\nabla\varphi\rangle d\mu.
               \end{align}
               First, we estimate the right side of (2.4). Let 
               \begin{align*}
                   \alpha_p=\frac{ps+t+a(t-p)}{(p-1)s+a(t-p+1)}>1,\quad \beta_p=\frac{ps+t+a(t-p)}{s+t-a}>1.
               \end{align*}
               From Young's inequality with the pair $(\frac{1}{\alpha_p},\frac{1}{\beta_p})$, we obtain
               \begin{align}
                   &b\int_Mu^{-a}|\nabla u|^{p-2}\varphi^{b-1}\langle \nabla u,\nabla\varphi\rangle d\mu\leq b\int_Mu^{-a}|\nabla u|^{p-1}\varphi^{b-1}|\nabla\varphi| d\mu
                   \nonumber\\
                   = & \int_M \left(\frac{a}{2}\right)^{\frac{1}{\alpha_p}}u^{\frac{-a-1}{\alpha_p}}|\nabla u|^{\frac{p}{\alpha_p}} \varphi^{\frac{b}{\alpha_p}}\cdot b \left(\frac{a}{2}\right)^{-\frac{1}{\alpha_p}}u^{-a+\frac{a+1}{\alpha_p}}|\nabla u|^{p-1-\frac{p}{\alpha_p}} \varphi^{b-1-\frac{b}{\alpha_p}}|\nabla \varphi|d\mu
                   \nonumber\\
                   \leq & \frac{a}{2}\int_Mu^{-a-1}|\nabla u|^p\varphi^bd\mu
                   \nonumber\\
                   &+b^{\beta_p}\left(\frac{a}{2}\right)^{-\frac{\beta_p}{\alpha_p}}\int_M u^{-a\beta_p+\frac{a+1}{\alpha_p}\beta_p}|\nabla u|^{(p-1)\beta_p-\frac{p}{\alpha_p}\beta_p} \varphi^{(b-1)\beta_p-\frac{b}{\alpha_p}\beta_p}|\nabla \varphi|^{\beta_p}d\mu
                   \nonumber\\
                   =&\frac{a}{2}\int_Mu^{-a-1}|\nabla u|^p\varphi^bd\mu+b^{\beta_p}\left(\frac{a}{2}\right)^{-\beta_p+1}\int_M u^{\beta_p-a-1}|\nabla u|^{p-\beta_p} \varphi^{b-\beta_p}|\nabla \varphi|^{\beta_p}d\mu.
               \end{align}
               Similarly, letting 
               \begin{align*}
                   \alpha_q=\frac{qs+t+a(t-q)}{(q-1)s+a(t-q+1)}>1,\quad \beta_q=\frac{qs+t+a(t-q)}{s+t-a}>1,
               \end{align*}
               we derive that 
               \begin{align}
                   &b\int_Mu^{-a}|\nabla u|^{q-2}\varphi^{b-1}\langle \nabla u,\nabla\varphi\rangle d\mu
                   \nonumber\\
                   \leq &\frac{a}{2}\int_Mu^{-a-1}|\nabla u|^q\varphi^bd\mu+b^{\beta_q}\left(\frac{a}{2}\right)^{-\beta_q+1}\int_M u^{\beta_q-a-1}|\nabla u|^{q-\beta_q} \varphi^{b-\beta_q}|\nabla \varphi|^{\beta_q}d\mu.
               \end{align}
               Putting (2.5) and (2.6) into (2.4), we have
               \begin{align}
                 &\int_Mu^{s-a}|\nabla u|^t\varphi^bd\mu +\frac{a}{2}\int_Mu^{-a-1}|\nabla u|^p\varphi^bd\mu +\frac{a}{2}\int_Mu^{-a-1}|\nabla u|^q\varphi^bd\mu
                 \nonumber\\
                 \leq &b^{\beta_p}\left(\frac{a}{2}\right)^{-\beta_p+1}\int_M u^{\beta_p-a-1}|\nabla u|^{p-\beta_p} \varphi^{b-\beta_p}|\nabla \varphi|^{\beta_p}d\mu
                 \nonumber\\
                 &+b^{\beta_q}\left(\frac{a}{2}\right)^{-\beta_q+1}\int_M u^{\beta_q-a-1}|\nabla u|^{q-\beta_q} \varphi^{b-\beta_q}|\nabla \varphi|^{\beta_q}d\mu.
               \end{align}
               Set 
               \begin{align*}
                   \gamma_p=\frac{s+t-a}{p-a-1}>1, \quad \rho_p=\frac{s+t-a}{s+t-p+1}>1.
               \end{align*}
               Since $\frac{p-\beta_p}{\beta_p-a-1}=\frac{t}{s-a}$, applying H$\ddot{\mathrm{o}}$lder's inequality with the pair $(\frac{1}{\gamma_p},\frac{1}{\rho_p})$, we derive
               \begin{align}
                   &\int_M u^{\beta_p-a-1}|\nabla u|^{p-\beta_p} \varphi^{b-\beta_p}|\nabla \varphi|^{\beta_p}d\mu
                   \nonumber\\
                   =&\int_M u^{\beta_p-a-1}|\nabla u|^{p-\beta_p} \varphi^{\frac{b}{\gamma_p}}\cdot \varphi^{b-\beta_p-\frac{b}{\gamma_p}}|\nabla \varphi|^{\beta_p}d\mu
                   \nonumber\\
                   \leq & \left(\int_{supp|\nabla\varphi|}u^{s-a}|\nabla u|^t\varphi^bd\mu \right)^{\frac{1}{\gamma_p}}\cdot \left(
                    \int_M \varphi^{b-\beta_p\rho_p}|\nabla\varphi|^{\beta_p\rho_p}d\mu\right)^{\frac{1}{\rho_p}}.
               \end{align}
               Similarly, letting 
               \begin{align*}
                   \gamma_q=\frac{s+t-a}{q-a-1}>1, \quad \rho_q=\frac{s+t-a}{s+t-q+1}>1,
               \end{align*}
               we obtain
               \begin{align}
                   &\int_M u^{\beta_q-a-1}|\nabla u|^{q-\beta_q} \varphi^{b-\beta_q}|\nabla \varphi|^{\beta_q}d\mu
                   \nonumber\\
                   \leq & \left(\int_{supp|\nabla\varphi|}u^{s-a}|\nabla u|^t\varphi^bd\mu \right)^{\frac{1}{\gamma_q}}\cdot \left(
                    \int_M \varphi^{b-\beta_q\rho_q}|\nabla\varphi|^{\beta_q\rho_q}d\mu\right)^{\frac{1}{\rho_q}}.
               \end{align}
               Note that $0\leq \varphi \leq 1$ and $b>\mathrm{max}\{\frac{ps+t+a(t-p)}{s+t-a}, \frac{qs+t+a(t-q)}{s+t-a}\} $. Combining (2.8), (2.9) with (2.7), we have
               \begin{align*}
                   &\int_M u^{s-a}|\nabla u|^t\varphi^bd\mu
                   \nonumber\\
                   \leq &b^{\beta_p}\left(\frac{a}{2}\right)^{-\beta_p+1}\left(\int_{supp|\nabla\varphi|}u^{s-a}|\nabla u|^t\varphi^bd\mu \right)^{\frac{1}{\gamma_p}}\cdot \left(
                    \int_M |\nabla\varphi|^{\beta_p\rho_p}d\mu\right)^{\frac{1}{\rho_p}}
                    \\
                    &+ b^{\beta_q}\left(\frac{a}{2}\right)^{-\beta_q+1}\left(\int_{supp|\nabla\varphi|}u^{s-a}|\nabla u|^t\varphi^bd\mu \right)^{\frac{1}{\gamma_q}}\cdot \left(
                    \int_M |\nabla\varphi|^{\beta_q\rho_q}d\mu\right)^{\frac{1}{\rho_q}}.
               \end{align*}
               Then (2.1) follows from the definition of $ \beta_p,\gamma_p,\rho_p,\beta_q,\rho_q,\gamma_q$. Next we prove (2.2). Since $u^s|\nabla u|^t\in L_{loc}^{1}(M) $ and $u^{-1}\in L^{\infty}_{loc}(M) $, we have
               \begin{align*}
                   \int_M u^{s-a}|\nabla u|^t\varphi^bd\mu<\infty.
               \end{align*}
               Noting that if $x<\infty$ satisfies 
               \begin{align*}
                   x^c-c_1x^d-c_2\leq 0, \quad c>d>0,c_1,c_2>0 ,
               \end{align*}
               then 
               \begin{align*}
                   x\leq (2c_1)^{\frac{1}{c-d}}+(2c_2)^{\frac{1}{c}}.
               \end{align*}
               Letting $x=\int_M u^{s-a}|\nabla u|^t\varphi^bd\mu$ and combining with (2.1), we obtain (2.2). 
      \end{proof}
      In the remaining part of this section, we assume (2.3) holds and discuss the choice of $a$. Rearranging (2.3), we have
      \begin{align}  
          \frac{s+t-a}{p-a-1}>1,\quad 
   \frac{s+t-a}{q-a-1}>1
\end{align}
and 
\begin{align}  
        \frac{ps+t+a(t-p)}{s+t-a}>1,\quad 
           \frac{qs+t+a(t-q)}{s+t-a}>1.
\end{align}
    From (2.10), we have $ (p-a-1)(q-a-1)>0$. Hence, the discussion can be divided into two parts. 
    \\
    (1) $0<a<q-1$. In this case, we have $s+t>p-1$ and 
    \begin{align}
        (p-1)s+a(t-p+1)>0,\quad (q-1)s+a(t-q+1)>0.
    \end{align}
    Then (2.12) holds if $s,t,a$ satisfy one of the following conditions:
    \par
    (1.a)  $t> p-1$, $s\geq 0$ and $0<a<q-1$;
    \par
    (1.b)  $t>p-1$,$s<0$ and
    \begin{align*}
        \frac{s}{q-1}+\frac{t}{p-1}>1,\quad \frac{(1-p)s}{t-p+1}<a<q-1;
    \end{align*}
    \par
    (1.c) $q-1\leq t\leq p-1 $ and 
    \begin{align*}
        s>0,\quad 0<a<\mathrm{min}\left\{\frac{(p-1)s}{p-t-1},q-1\right\}=q-1;
    \end{align*}
    \par
    (1.d) $ t< q-1 $ and
    \begin{align*}
        s>0,\quad 0<a<\mathrm{min}\left\{\frac{(p-1)s}{p-t-1},\frac{(q-1)s}{q-t-1},q-1\right\}=q-1.
    \end{align*}
    (2) $a>p-1$. In this case, we have $s+t<q-1$ and 
    \begin{align}
        (p-1)s+a(t-p+1)<0,\quad (q-1)s+a(t-q+1)<0.
    \end{align}
    Then (2.13) holds if $s,t,a$ satisfy one of the following conditions:
    \par
    (2.a) $t>p-1,s<0$ and 
    \begin{align*}
        \frac{s}{p-1}+\frac{t}{q-1}<1, \quad  p-1<a<\mathrm{min}\left\{\frac{(1-p)s}{t-p+1},\frac{(1-q)s}{t-q+1}\right\}=\frac{(1-q)s}{t-q+1};
    \end{align*}
    \par
    (2.b) $q-1\leq t\leq p-1,s<0$ and
    \begin{align*}
        \frac{s}{p-1}+\frac{t}{q-1}<1, \quad  p-1<a<\frac{(1-q)s}{t-q+1};
    \end{align*}
    \par
    (2.c) $t\leq q-1, s< 0$ and $a>p-1$;
    \par
    (2.d) $t<q-1,s\geq 0$ and 
    \begin{align*}
        a>\mathrm{max}\left\{p-1,\frac{(q-1)s}{q-t-1},\frac{(p-1)s}{p-t-1}\right\}=p-1.
    \end{align*}
    Based on the above discussions, we make the following admissible choice of $a$:
    \begin{align} 
    \begin{cases}
        G_1=\{(s,t)|\  s\geq 0,t>p-s-1 \}; \\
           \\
           G_2=\{(s,t) |\ s<0,\frac{s}{q-1}+\frac{t}{p-1}>1\}; \\
           \\
           G_3=\{(s,t)|\ t>q-1,\frac{s}{p-1}+\frac{t}{q-1}<1\} ;\\
           \\
           G_4=\{(s,t)|\ t\leq q-1,s<q-t-1 \},
    \end{cases} 
    \Longrightarrow 
    \begin{cases}
        0<a<q-1; \\
           \\
           \frac{(1-p)s}{t-p+1}<a<q-1; \\
           \\
            p-1<a<\frac{(1-q)s}{t-q+1};\\
           \\
           a>p-1 .
    \end{cases}
\end{align}

    \section{Proof of the main results}
    In this section, we assume that $u$ is a nontrivial positive weak solution of (1.6) and Lemma 2.1 holds. Before the proof, we determine the test function in (1.6). First, we construct the following cut-off function which was introduced in \cite{SXX}. Define $\eta_k=h(\frac{r(x)}{2^k})$, where $r(x)=d(x,x_0)$ for some fixed $x_0\in M$ and $h$ is a smooth function satisfying
    \begin{align*}  
    \begin{cases}
        0\leq h(t)\leq 1, &    t\in [0,\infty) ,
           \\
           h(t)=1,  & t\in [0,1),
           \\
           h(t)=0, & t\in [2,\infty).
    \end{cases} 
\end{align*}
 For sufficiently large $i\in \mathbb{N}$, letting $\varphi_i(x)=i^{-1}\sum\limits_{k=i+1}^{2i}\eta_k(x)$, then it is easy to see that
  \begin{align*}
    \varphi_i(x) &= 
    \begin{cases}
        1, & \text{if } x \in B_{2^{i+1}}; \\ 
        0,  & \text{if } x\notin B_{2^{2i+1}}^c.
    \end{cases} 
\end{align*}
Noting that the support of $\nabla\eta_k$ is different, we have
\begin{align*}
    |\nabla\varphi_i|^\theta\leq C i^{-\theta}\sum\limits_{k=i+1}^{2i}2^{-k\theta}\chi_{2^k\leq r(\cdot)\leq 2^{k+1}}(x),
\end{align*}
where $\chi$ is the characteristic function. For convenience, we collect here some notations that we are going to use in the proof appearing below. We define 
\begin{align}
    &J= \int_M u^{s-a}|\nabla u|^t\varphi_i^bd\mu,
    \\
    &I_p=(2b)^{\frac{ps+t+a(t-p)}{s+t-p+1}}a^{-\frac{(p-1)s+a(t-p+1)}{s+t-p+1}}\int_M|\nabla \varphi_i|^{\frac{ps+t+a(t-p)}{s+t-p+1}}d\mu,
    \\
    &I_q=(2b)^{\frac{qs+t+a(t-q)}{s+t-q+1}}a^{-\frac{(q-1)s+a(t-q+1)}{s+t-q+1}}\int_M|\nabla \varphi_i|^{\frac{qs+t+a(t-q)}{s+t-q+1}}d\mu.
\end{align}
Then (2.2) implies that 
\begin{align}
    J\leq C^{\frac{s+t-a}{s+t-p+1}}I_p+C^{\frac{s+t-a}{s+t-q+1}}I_q.
\end{align}
The key in the proof is to estimate $J,I_p,I_q$. Now we begin the proof.

~~\\
         $\mathit{Proof\ of\ Theorem\ 1.2.}$ Since $(s,t)\in G_1$ and (2.14), we have $0<a<q-1.$
         
         ~\\
         (a) $(s,t)\in G_{1,a}$. We choose $a=i^{-1}$ and a fixed $b>\mathrm{max}\{\frac{ps+t+a(t-p)}{s+t-a}, \frac{qs+t+a(t-q)}{s+t-a}\}$, which yields that
         \begin{align}
             C^{\frac{s+t-a}{s+t-p+1}}\leq C,\quad (2b)^{\frac{ps+t+a(t-p)}{s+t-p+1}}\leq C. \nonumber
         \end{align}
         Hence we find 
         \begin{align}
             I_q\leq &Ci^{\frac{(q-1)s+a(t-q+1)}{s+t-q+1}}\int_M|\nabla \varphi_i|^{\frac{qs+t+a(t-q)}{s+t-q+1}}d\mu
             \nonumber\\
             \leq & C i^{\frac{(q-1)s+a(t-q+1)}{s+t-q+1}}i^{-\frac{qs+t+a(t-q)}{s+t-q+1} }\left(\sum\limits_{k=i+1}^{2i}\int_{B_{2^{k+1}}\setminus B_{2^{k}}}2^{-k\frac{qs+t+a(t-q)}{s+t-q+1}}d\mu  \right)
             \nonumber\\
             \leq & C i^{-\frac{s+t-a}{s+t-q+1}}\left(\sum\limits_{k=i+1}^{2i}2^{-k\frac{qs+t+a(t-q)}{s+t-q+1}}V(2^{k+1})  \right)
             \nonumber\\
             \leq & C i^{-\frac{s+t-a}{s+t-q+1}}\left(\sum\limits_{k=i+1}^{2i}2^{k( \frac{qs+t}{s+t-q+1}-\frac{qs+t+a(t-q)}{s+t-q+1})}k^{\frac{q-1}{s+t-q+1}}  \right)
             \nonumber\\
             \leq & C i^{ \frac{i^{-1}}{s+t-q+1}}\leq C,
         \end{align}
         where we use (1.7) in the fourth inequality. Since $(s,t)\in G_{1,a}$, we have
         \begin{align*}
             \frac{qs+t}{s+t-q+1}\leq \frac{ps+t}{s+t-p+1}, \quad \frac{q-1}{s+t-q+1}\leq \frac{p-1}{s+t-p+1}.
         \end{align*}
         By the same argument, we obtain
        \begin{align}
            I_p
             \leq & C i^{-\frac{s+t-a}{s+t-p+1}}\left(\sum\limits_{k=i+1}^{2i}2^{k( \frac{ps+t}{s+t-p+1}-\frac{ps+t+a(t-p)}{s+t-p+1})}k^{\frac{p-1}{s+t-p+1}}  \right)
             \nonumber\\
             \leq & C i^{ \frac{i^{-1}}{s+t-p+1}}\leq C.
        \end{align}
        Putting (3.5) and (3.6) into (3.4), we derive
        \begin{align*}
            \int_{B_{2^{i+1}}}u^{s-i^{-1}}|\nabla u|^td\mu\leq J\leq C .
        \end{align*}
        Letting $i\xrightarrow{}\infty$, we have
        \begin{align*}
            \int_{M}u^{s}|\nabla u|^td\mu\leq C.
        \end{align*}
        Using (2.1) and repeating the same procedure, we have
        \begin{align*}
          &  \int_{B_{2^{i+1}}}u^{s-i^{-1}}|\nabla u|^td\mu 
          \\
          \leq& C \left(\int_{M\setminus B_{2^{i+1}}}u^{s-i^{-1}}|\nabla u|^td\mu\right)^{\frac{s+t-p+1}{s+t-i^{-1}}}+C\left(\int_{M\setminus B_{2^{i+1}}}u^{s-i^{-1}}|\nabla u|^td\mu\right)^{\frac{s+t-q+1}{s+t-i^{-1}}}.
        \end{align*}
        Letting $i\xrightarrow{}\infty$ again, we obtain 
        \begin{align*}
            \int_{M}u^{s}|\nabla u|^td\mu=0
        \end{align*}
        which contradicts that $u$ is nontrivial.

        ~\\
        (b) $(s,t)\in G_{1,b}$. In this case, we choose $a=q-1-i^{-1}$ and a fixed $b>\mathrm{max}\{\frac{ps+t+a(t-p)}{s+t-a}, \frac{qs+t+a(t-q)}{s+t-a}\}$, which yields that
         \begin{align}
             \mathrm{max}\{C^{\frac{s+t-a}{s+t-p+1}},\ (2b)^{\frac{ps+t+a(t-p)}{s+t-p+1}},\ a^{-\frac{(p-1)s+a(t-p+1)}{s+t-p+1}}, \ a^{-\frac{(q-1)s+a(t-q+1)}{s+t-q+1}}\}\leq C. 
         \end{align}
         As above, we have
        \begin{align}
            I_q \leq &C\int_M|\nabla \varphi|^{\frac{qs+t+a(t-q)}{s+t-q+1}}d\mu
             \nonumber\\
             \leq & C i^{-\frac{qs+t+a(t-q)}{s+t-q+1} }\left(\sum\limits_{k=i+1}^{2i}\int_{B_{2^{k+1}}\setminus B_{2^{k}}}2^{-k\frac{qs+t+a(t-q)}{s+t-q+1}}d\mu  \right)
             \nonumber\\
             \leq &C i^{-\frac{qs+t+a(t-q)}{s+t-q+1} } \left(\sum\limits_{k=i+1}^{2i}2^{-k\frac{qs+t+a(t-q)}{s+t-q+1}}V(2^{k+1})  \right)
             \nonumber\\
             \leq &C i^{-\frac{qs+t+a(t-q)}{s+t-q+1} } \left(\sum\limits_{k=i+1}^{2i}2^{k( q-\frac{qs+t+(q-1)(t-q)-i^{-1}(t-q)}{s+t-q+1})}k^{\frac{q-1}{s+t-q+1}}  \right)
             \nonumber\\
             \leq &Ci^{\frac{i^{-1}(t-q)}{s+t-q+1}}\leq C. 
        \end{align}
        It is easy to see that $I_p\leq C$ since $q\leq p$, which implies that
        $J\leq C$ and 
        \begin{align*}
            \int_{B_{2^{i+1}}}u^{s-q+1+i^{-1}}|\nabla u|^td\mu \leq C.
        \end{align*}
        In particular, letting $i\xrightarrow{} \infty$, we have
        \begin{align}
            \int_{M}u^{s-q+1}|\nabla u|^td\mu \leq C.
        \end{align}
        Using (2.1) and repeating the same procedure, we obtain
        \begin{align*}
          &  \int_{B_{2^{i+1}}}u^{s-q+1+i^{-1}}|\nabla u|^td\mu 
          \\
          \leq& C \left(\int_{M\setminus B_{2^{i+1}}}u^{s-q+1+i^{-1}}|\nabla u|^td\mu\right)^{\frac{s+t-p+1}{s+t-i^{-1}}}+C\left(\int_{M\setminus B_{2^{i+1}}}u^{s-q+1+i^{-1}}|\nabla u|^td\mu\right)^{\frac{s+t-q+1}{s+t-i^{-1}}}.
        \end{align*}
        Letting $i\xrightarrow{}\infty$ again, we achieve 
        \begin{align*}
            \int_{M}u^{s-q+1}|\nabla u|^td\mu=0
        \end{align*}
        which contradicts that $u$ is nontrivial. This completes the proof. \qed

        ~\\
        $\mathit{Proof\ of\ Theorem\ 1.3.}$ Since $(s,t)\in G_2$ and (2.14), we have
        \begin{align*}
            \frac{(1-p)s}{t-p+1}<a<q-1.
        \end{align*}
        We assume that $V(r)\leq Cr^\alpha (\mathrm{ln}\ r)^{\beta}$ for $r$ large enough. As in the proof of Theorem 1.2, we only need to show that $I_p\leq C$ and $I_q\leq C$. Noting that $\frac{(1-p)s}{t-p+1}<a<q-1$, then $a$ can take the value of $\frac{(1-p)s}{t-p+1}+i^{-1} $ or $q-1-i^{-1}$. Hence we can choose a fixed $b>\mathrm{max}\{\frac{ps+t+a(t-p)}{s+t-a}, \frac{qs+t+a(t-q)}{s+t-a}\} $ and (3.7) holds. Then, for $i$ large enough, we have
        \begin{align}
             I_q \leq &C\int_M|\nabla \varphi_i|^{\frac{qs+t+a(t-q)}{s+t-q+1}}d\mu
             \nonumber\\
             \leq & C i^{-\frac{qs+t+a(t-q)}{s+t-q+1} }\left(\sum\limits_{k=i+1}^{2i}\int_{B_{2^{k+1}}\setminus B_{2^{k}}}2^{-k\frac{qs+t+a(t-q)}{s+t-q+1}}d\mu  \right)
             \nonumber\\
             \leq &C i^{-\frac{qs+t+a(t-q)}{s+t-q+1} } \left(\sum\limits_{k=i+1}^{2i}2^{-k\frac{qs+t+a(t-q)}{s+t-q+1}}V(2^{k+1})  \right)
             \nonumber\\
             \leq & C i^{-\frac{qs+t+a(t-q)}{s+t-q+1} } \left(\sum\limits_{k=i+1}^{2i}2^{k(\alpha-\frac{qs+t+a(t-q)}{s+t-q+1})}k^\beta  \right)
        \end{align}
        and 
        \begin{align}
            I_p\leq C i^{-\frac{ps+t+a(t-p)}{s+t-p+1} } \left(\sum\limits_{k=i+1}^{2i}2^{k(\alpha-\frac{ps+t+a(t-p)}{s+t-p+1})}k^\beta  \right).
        \end{align}
        As in the proof of Theorem 1.2, we have
        \\
        (1) In the case $a=\frac{(1-p)s}{t-p+1}+i^{-1}$, $I_p\leq C$ and $I_q\leq C$ $\beta=\alpha-1$ and
        \begin{align}
             \alpha\leq \mathrm{min}\{\frac{t}{t-p+1} , \frac{qs+t+\frac{(1-p)(t-q)s}{t-p+1}}{s+t-q+1}\}.
        \end{align}
        (2) In the case $a=q-1-i^{-1}$, $I_p\leq C$ and $I_q\leq C$ if $\beta=\alpha-1$ and 
        \begin{align}
            \alpha\leq \mathrm{min}\{q , \frac{ps+t+(t-p)(q-1)}{s+t-p+1}\}.
        \end{align}
        Consequently, from (3.12) and (3.13), we have $I_p\leq C$ and $I_q\leq C$ if $\beta=\alpha-1$ and 
        \begin{align}
            \alpha=\mathrm{max}\{\mathrm{min}\{\frac{t}{t-p+1} , \frac{qs+t+\frac{(1-p)(t-q)s}{t-p+1}}{s+t-q+1}\}, \mathrm{min}\{q , \frac{ps+t+(t-p)(q-1)}{s+t-p+1}\}\}.
        \end{align}
        A direct computation shows that
        \begin{align*}
            \frac{qs+t+\frac{(1-p)(t-q)s}{t-p+1}}{s+t-q+1}=\frac{t}{t-p+1}(1+\frac{(q-p)(s+1)}{s+t-q+1})
        \end{align*}
        and 
        \begin{align*}
            \frac{ps+t+(t-p)(q-1)}{s+t-p+1}=q+\frac{(p-q)(s+1)}{s+t-p+1}.
        \end{align*}
        Hence, we can choose 
        \begin{align}
            \alpha=\mathrm{max}\{ \frac{qs+t+\frac{(1-p)(t-q)s}{t-p+1}}{s+t-q+1}, q \}
        \end{align}
        if $s\geq -1$ and
        \begin{align}
            \alpha=\mathrm{max}\{\frac{t}{t-p+1} ,  \frac{ps+t+(t-p)(q-1)}{s+t-p+1}\}
        \end{align}
        if $s<-1$. Note that 
        \begin{align}
            q-\frac{qs+t+\frac{(1-p)(t-q)s}{t-p+1}}{s+t-q+1}=\frac{(t-q)(p-1)(q-1)}{(s+t-q+1)(t-p+1)}(\frac{s}{q-1}+\frac{t}{p-1}-1)
        \end{align}
        and
        \begin{align}
            & \frac{ps+t+(t-p)(q-1)}{s+t-p+1}-\frac{t}{t-p+1}
            \nonumber\\
            =&\frac{(t-p)(p-1)(q-1)}{(s+t-p+1)(t-p+1)}(\frac{s}{q-1}+\frac{t}{p-1}-1).
        \end{align}
        From (3.14)-(3.18), we conclude that $I_p$ and $I_q$ are uniformly bounded if one of the following holds:
        \\
        (a) $(s,t)\in G_{2,a}=G_2\cap\{(s,t)|\ s\geq -1,t\geq q\}$ and $\alpha=q$;
        \\
        (b) $(s,t)\in G_{2,b}=G_2\cap\{(s,t)|\ s<-1,t\geq p\}$ and $\alpha=\frac{ps+t+(t-p)(q-1)}{s+t-p+1}$;
        \\
        (c) $(s,t)\in G_{2,c}=G_2\cap\{(s,t)|\ s\geq -1,t<q\}$ and $\alpha=\frac{t}{t-p+1}(1+\frac{(q-p)(s+1)}{s+t-q+1})$;
        \\
        (d) $(s,t)\in G_{2,d}=G_2\cap\{(s,t)|\ s< -1,t<p\}$ and $\alpha=\frac{t}{t-p+1}$. It follows from the calculation that the set $G_{2,d}$ is empty.
        \par
        Finally, by the same argument in the proof of Theorem 1.2, we have
        \begin{align*}
            \int_Mu^{s-a_\infty
            }|\nabla u|^td\mu =0,
        \end{align*}
        where $a_\infty=\lim\limits_{i\xrightarrow{}\infty}a$. This contradicts that $u$ is nontrivial. \qed

        ~\\
        $\mathit{Proof\ of\ Theorem\ 1.4.}$  The proof is similar to the previous one, so we only provide a brief sketch here. Since $(s,t)\in G_3$ and (2.14), we have 
        \begin{align*}
            p-1<a<\frac{(1-q)s}{t-q+1}.
        \end{align*}
        Thus, $a$ can take the value of $p-1+i^{-1} $ or $\frac{(1-q)s}{t-q+1}-i^{-1} $. We assume that $V(r)\leq Cr^\alpha (\mathrm{ln}\ r)^{\beta}$ for $r$ large enough. Similarly, $I_p$ and $I_q$ are uniformly bounded if $\beta=\alpha-1$ and
        \begin{align}
            \alpha=\mathrm{max}\{\mathrm{min}\{\frac{t}{t-q+1} , \frac{ps+t+\frac{(1-q)(t-p)s}{t-q+1}}{s+t-p+1}\}, \mathrm{min}\{p , \frac{qs+t+(t-q)(p-1)}{s+t-q+1}\}\}.
        \end{align}
       As in the proof of Theorem 1.3, we consider the following four cases:
       \\
       (1) $(s,t)\in G_3\cap\{(s,t)|\ s\geq -1,t\geq p\}=\emptyset$;
       \\
        (2)  $(s,t)\in G_{3,a}=G_3\cap\{(s,t)|\ s<-1,t\geq q\}$ with $\alpha=\frac{qs+pt-(p-1)q}{s+t-q+1}$;
   \\
     (3)  $(s,t)\in G_{3,b}=G_3\cap\{(s,t)|\ s\geq  -1,t<p\}$ with $\alpha=\frac{t}{t-q+1}(1+\frac{(p-q)(s+1)}{s+t-p+1})$;
   \\
   (4)  $(s,t)\in G_{3,c}=G_3\cap\{(s,t)|\ s< -1,t<q\}$ with $\alpha=\frac{t}{t-q+1}$.
   \\
   It follows that 
   \begin{align*}
            \int_Mu^{s-a_\infty
            }|\nabla u|^td\mu =0,
        \end{align*}
        where $a_\infty=\lim\limits_{i\xrightarrow{}\infty}a$. This leads to a contradiction.
        \qed
 
 ~\\
        $\mathit{Proof\ of\ Theorem\ 1.5.}$ Since $(s,t)\in G_4$, we have $a>p-1$. 
        \\
        (a) $(s,t)\in G_{4,a}=\{(s,t)|\ s<0,t=q-1\}$. In this case, we choose 
        \begin{align}
            a=l+i^{-1}
        \end{align}
        for $l$ large enough and a fixed $b>0$ depends on $l$. Then for large $i$, since $V(r)\leq Cr^\alpha  $ for $r>>1$, we obtain 
        \begin{align}
             I_q \leq &C\int_M|\nabla \varphi|^{\frac{qs+t+a(t-q)}{s+t-q+1}}d\mu
             \nonumber\\
             \leq & C i^{-\frac{qs+t+a(t-q)}{s+t-q+1} }\left(\sum\limits_{k=i+1}^{2i}\int_{B_{2^{k+1}}\setminus B_{2^{k}}}2^{-k\frac{qs+t+a(t-q)}{s+t-q+1}}d\mu  \right)
             \nonumber\\
             \leq & C i^{-\frac{qs+t+a(t-q)}{s+t-q+1} } \left(\sum\limits_{k=i+1}^{2i}2^{k(\alpha-\frac{qs+t+a(t-q)}{s+t-q+1})} \right)
        \end{align}
        and
        \begin{align}
            I_p \leq &C i^{-\frac{ps+t+a(t-p)}{s+t-p+1} } \left(\sum\limits_{k=i+1}^{2i}2^{k(\alpha-\frac{ps+t+a(t-p)}{s+t-p+1})} \right).
        \end{align}
        Noting that 
        \begin{align*}
            \frac{t-p}{s+t-p+1}>0,\quad \frac{t-q}{s+t-q+1}>0,
        \end{align*}
        there exists a $l_0$ depends on $\alpha,p,q,s,t$ such that if $l\geq l_0$, then $I_p\leq C$ and $I_q\leq C$. Moreover, we have
        \begin{align*}
            \int_{B_{2^{i+1}}}u^{s-l-i^{-1}}|\nabla u|^td\mu \leq C.
        \end{align*}
        Repeating the same argument as in proof of Theorem 1.2, we have
        \begin{align*}
            \int_{M}u^{s-l}|\nabla u|^td\mu=0.
        \end{align*}
        This contradicts that $u$ is nontrivial.
        
        ~\\
        (b) $(s,t)\in G_{4,b}$. Assuming that $u$ is nontrivial, then there exists a $k>0$  and a set
        \begin{align*}
            W=\{x\in M| 0< u(x)\leq k,|\nabla u|>0\}
        \end{align*}
        such that $W$ has positive measure. Hence
        \begin{align}
            \int_{W\cap B_R}|\nabla u|^td\mu\leq k^{a-s}\int_{B_R}u^{s-a}|\nabla u|^td\mu \leq Ck^a\int_Mu^{s-a}|\nabla u|^t\varphi^b_Rd\mu,
        \end{align}
        where $\varphi_R=h(\frac{r(x)}{R})$ and $a>s$. 
       Recalling (2.2), we have
       \begin{align}
              &k^a\int_M u^{s-a}|\nabla u|^t\varphi_R^bd\mu 
              \nonumber\\
              \leq& k^aC^{\frac{s+t-a}{s+t-p+1}}(2b)^{\frac{ps+t+a(t-p)}{s+t-p+1}}a^{-\frac{(p-1)s+a(t-p+1)}{s+t-p+1}}\int_M|\nabla \varphi_R|^{\frac{ps+t+a(t-p)}{s+t-p+1}}d\mu 
              \nonumber\\
              &+ k^aC^{\frac{s+t-a}{s+t-q+1}}(2b)^{\frac{qs+t+a(t-q)}{s+t-q+1}}a^{-\frac{(q-1)s+a(t-q+1)}{s+t-q+1}}\int_M|\nabla \varphi_R|^{\frac{qs+t+a(t-q)}{s+t-q+1}}d\mu .
          \end{align}
          Let $a=2R$ and $b=c_1R$ such that (2.3) holds, where $R$ large enough and $c_1>0$ does not depends on $R$. Then we have
          \begin{align*}
              &k^aC^{\frac{s+t-a}{s+t-p+1}}(2b)^{\frac{ps+t+a(t-p)}{s+t-p+1}}a^{-\frac{(p-1)s+a(t-p+1)}{s+t-p+1}}\int_M|\nabla \varphi_R|^{\frac{ps+t+a(t-p)}{s+t-p+1}}d\mu
              \\
              \leq &C^R (2R)^{-\frac{a}{s+t-p+1}}V(2R)R^{-\frac{ps+t+a(t-p)}
              {s+t-p+1}}
              \\
              \leq & C^R e^{(\kappa-\frac{t-p+1}{s+t-p+1})2R\mathrm{ln}(2R)}
              \\
              \leq & Ce^{(\kappa+\epsilon-\frac{t-p+1}{s+t-p+1})2R\mathrm{ln}(2R)},
          \end{align*}
          where $C$ does not depends on $R$ and $\epsilon>0$ to be determined later. By the same argument, it is obvious that
          \begin{align*}
              &k^aC^{\frac{s+t-a}{s+t-q+1}}(2b)^{\frac{qs+t+a(t-q)}{s+t-q+1}}a^{-\frac{(q-1)s+a(t-q+1)}{s+t-q+1}}\int_M|\nabla \varphi_R|^{\frac{qs+t+a(t-q)}{s+t-q+1}}d\mu
              \\
              \leq & Ce^{(\kappa+\epsilon-\frac{t-q+1}{s+t-q+1})2R\mathrm{ln}(2R)}.
          \end{align*}
          Combining with (3.23) and (3.24), we have
          \begin{align*}
              \int_{W\cap B_R}|\nabla u|^td\mu\leq Ce^{(\kappa+\epsilon-\frac{t-p+1}{s+t-p+1})2R\mathrm{ln}(2R)}+Ce^{(\kappa+\epsilon-\frac{t-q+1}{s+t-q+1})2R\mathrm{ln}(2R)}. 
          \end{align*}
          Since 
          \begin{align*}
            0<\kappa <\mathrm{min}\{\frac{t-p+1}{s+t-p+1},\frac{t-q+1}{s+t-q+1}\},
          \end{align*}
          then we can find a $\epsilon>0$ such that
          \begin{align*}
              \kappa+\epsilon-\frac{t-q+1}{s+t-q+1}<0,\quad \kappa+\epsilon-\frac{t-p+1}{s+t-p+1}<0.
          \end{align*}
          Letting $R\xrightarrow{}\infty$, we have
          \begin{align*}
              \int_{W}|\nabla u|^td\mu=0,
          \end{align*}
          which contradicts that the measure of $W$ is positive. This completes the proof. \qed
          
          \section{Some examples}
          In this section, we will construct the solution by patching together functions near the origin and at infinity, i.e., this proves Theorem 1.8. Actually, the example originates from \cite{SXX}, and the crux of the proof lies in demonstrating that the effect of $\Delta_pu$ is marginal.
                    
          ~\\
        $\mathit{Proof\ of\ Theorem\ 1.8\ (1)-(3)}.$
           Let $(\mathbb{R}^n,g)$ be a Riemannian manifold with metric
        \begin{align}
            g=dr^2+\varphi^2(r)d\theta^2,
        \end{align}
        where $(r,\theta)$ are the polar coordinates and $\varphi(r)$ is a smooth, positive, increasing function such that
        \begin{align*}  
    \varphi(r)=\begin{cases}
        r, & \quad \forall  r<< 1,
           \\
           (r^{\alpha-1} (\mathrm{ln}\ r)^\beta)^{\frac{1}{n-1}},  &  \quad \forall r>>1.\\
    \end{cases}
\end{align*}
           Here $\alpha\geq 1 $ and $ \beta>0$ are constants to be determined by the volume condition. Then $(\mathbb{R}^n,g)$ is a complete manifold and
       \begin{align*}  
    S(r)=\begin{cases}
        \omega_n r^{n-1}, & \quad \forall  r<< 1
           \\
           \omega_n r^{\alpha-1} (\mathrm{ln}\ r)^\beta,  &  \quad \forall  r>>1,\\
    \end{cases} 
\end{align*}
          where $S(r)$ is the surface area of the ball $B_O(r)$ centered at origin $O$ and $ \omega_n$ is the surface area of the unit ball in $\mathbb{R}^n$. In particular, the monotonicity of $S$ follows from that of $\varphi$ and the volume of the ball $B_O(r)$ satisfies the following estimate
          \begin{align*}
              V(r)=Vol(B_O(r))=\int_0^rS(y)dy\leq Cr^{\alpha} (\mathrm{ln}\ r)^\beta, \quad \forall  r>>1.
          \end{align*}
         Since we will construct a positive radial solution $v$, the inequality (1.5) is equal to
         \begin{align}
         (S|v^{'}|^{q-2}v^{'})^{'}+(S|v^{'}|^{p-2}v^{'})^{'}+Sv^s|v^{'}|^{t}\leq 0.
     \end{align}
         First, we define the positive radial function $u(r)$ by
          \begin{align*}  
    u(r)=\begin{cases}
        \frac{u_a(R_0)}{u_R(R_0)}u_R, & \quad   r\in [0,R_0],
           \\
           u_a ,  &  \quad   r\in (R_0,\infty),\\
    \end{cases} 
\end{align*}
   where 
   \begin{align*}
       u_a(r)=\int_r^{\infty} y^{-\frac{\alpha-1}{q-1}}(\mathrm{ln}\ y)^{-\frac{a}{q-1}}dy, \quad   r\in (R_0,\infty)
   \end{align*}
   and 
   \begin{align*}
         u_R(r)=\frac{R^{\gamma+1}-r^{\gamma+1}}{R^{\gamma+1}}, \quad   r\in [0,R_0].
     \end{align*}
   Here $a,\gamma>0,R_0>>1$ are constants to be determined later and $R>R_0$ is determined by the following gluing. Notice that 
   \begin{align*}
         \frac{u_a^{'}(R_0)}{u_a(R_0)}<0,\quad \lim\limits_{R\xrightarrow{}\infty}\frac{u^{'}_R(R_0)}{u_R(R_0)}=0,\quad  \lim\limits_{R\xrightarrow{}R_0^{+}}\frac{u^{'}_R(R_0)}{u_R(R_0)}=-\infty.
     \end{align*}
     Thus, we can choose $R>R_0$ such that 
     \begin{align*}
         \frac{u_a^{'}(R_0)}{u_a(R_0)}=\frac{u^{'}_R(R_0)}{u_R(R_0)}.
     \end{align*}
     This implies that $u\in C^1$. 
     \par
     For $r\in [0,R_0]$, $u=u_R$ and
     \begin{align*}
         u_R^{'}(r)=-(\gamma+1)\frac{r^\gamma}{R^{\gamma+1}}.
     \end{align*}
     Since $S^{'}\leq 0$, we have
     \begin{align*}
         (S|u_R^{'}|^{q-2}u_R^{'})^{'}=&-(S(\gamma+1)^{q-1}\frac{r^{(q-1)\gamma}}{R^{(\gamma+1)(q-1)}})^{'}
         \\
         \leq& -S(\gamma+1)^{q-1}(q-1)\gamma \frac{r^{(q-1)\gamma-1}}{R^{(\gamma+1)(q-1)}},
         \\
         (S|u_R^{'}|^{p-2}u_R^{'})^{'}\leq &-S(\gamma+1)^{p-1}(p-1)\frac{\gamma r^{(p-1)\gamma-1}}{R^{(\gamma+1)(q-1)}}\leq 0.
     \end{align*}
     Noting that $\frac{R^{\gamma+1}-R_0^{\gamma+1}}{R^{\gamma+1}}\leq u_R\leq 1$, we obtain
     \begin{align*}  
    Su_R^s|u_R^{'}|^{t}\leq\begin{cases}
        S(\gamma+1)^t\frac{r^{\gamma t}}{R^{(\gamma+1)t}}, & \quad s\geq 0,
           \\
           S(\frac{R^{\gamma+1}-R_0^{\gamma+1}}{R^{\gamma+1}})^s(\gamma+1)^t\frac{r^{\gamma t}}{R^{(\gamma+1)t}},  &  \quad s<0.
    \end{cases} 
\end{align*}
        Then the equality 
        \begin{align}
         (S|u_R^{'}|^{q-2}u_R^{'})^{'}+k(S|u_R^{'}|^{p-2}u_R^{'})^{'}+\lambda Su_R^s|u_R^{'}|^{t}\leq 0
     \end{align}
     holds for $r \in [0,R_0]$ and $0<k\leq 1$ if we select 
   \begin{align*}  
    \gamma\begin{cases}
        =c>0, & \quad q-t-1\leq  0,
           \\
           <\frac{1}{q-t-1},  &  \quad q-t-1>0
    \end{cases} 
\end{align*}
   and 
    \begin{align*}  
    0<\lambda\leq\begin{cases}
        (\gamma+1)^{q-t-1}(q-1)\gamma\frac{R_0^{\gamma(q-t-1)-1}}{R^{(\gamma+1)t}}, & \quad s\geq 0,
           \\
           (\frac{R^{\gamma+1}-R_0^{\gamma+1}}{R^{\gamma+1}})^{-s}(\gamma+1)^{q-t-1}(q-1)\gamma\frac{R_0^{\gamma(q-t-1)-1}}{R^{(\gamma+1)t}},  &  \quad s<0.
    \end{cases} 
\end{align*}
     To ensure $u^s|\nabla u|^t\in L_{loc}^{1}(B_{R_0}(O))$, we require $\gamma<{-\frac{n}{t}}$ when $t<0.$ 
     \par
     For $r>R_0>>1$, $u=u_a$ and 
     \begin{align*}
         u_a^{'}=-r^{-\frac{\alpha-1}{q-1}}(\mathrm{ln}\ r)^{-\frac{a}{q-1}}.
     \end{align*}
     To ensure that $u_a$ is finite, we need $\alpha>q$ or
     \begin{align*}
         \alpha=q, \quad a>q-1.
     \end{align*}
     From the definition of $S(r)=\omega_n r^{\alpha-1} (\mathrm{ln}\ r)^\beta$, we have
     \begin{align*}
         (S|u_R^{'}|^{q-2}u_R^{'})^{'}=&-(Sr^{-{\alpha+1}}(\mathrm{ln}\ r)^{-a})^{'}=-\omega_n(\beta-a)(\mathrm{ln}\ r)^{\beta-a-1}\frac{1}{r},
         \\
         (S|u_R^{'}|^{p-2}u_R^{'})^{'}=&-(Sr^{-\frac{(\alpha-1)(p-1)}{q-1}}(\mathrm{ln}\ r)^{-a\frac{p-1}{q-1}})^{'}
         \\
         \leq &C r^{-\frac{(\alpha-1)(p-1)}{q-1}-1}(\mathrm{ln}\ r)^{-a\frac{p-1}{q-1}},
         \\
         Su_R^s|u_R^{'}|^{t}\leq&C_0 r^{\alpha-1+s-\frac{(\alpha-1)(s+t)}{q-1}}(\mathrm{ln}\ r)^{\beta-a\frac{s+t}{q-1}}.
     \end{align*}
     Here we use $R_0>>1$ to estimate $(S|u_R^{'}|^{p-2}u_R^{'})^{'}$. Since $r>R_0>>1$, we only need to compare the leading coefficients of 
     $(S|u_R^{'}|^{q-2}u_R^{'})^{'},(S|u_R^{'}|^{p-2}u_R^{'})^{'} $ and $ Su_R^s|u_R^{'}|^{t}.$ It is easy to see that $-\frac{(\alpha-1)(p-1)}{q-1}-1<-1$.
     Hence there exists a $\lambda>0$ such that the equality 
        \begin{align}
         (S|u_R^{'}|^{q-2}u_R^{'})^{'}+k(S|u_R^{'}|^{p-2}u_R^{'})^{'}+\lambda Su_R^s|u_R^{'}|^{t}\leq 0
     \end{align}
     holds for $R_0>>1$ and $0<k\leq 1$ if   
     \begin{align*}
        \beta>a,\  \frac{\alpha-1}{q-1}(s+t-q+1)> s+1
     \end{align*}
     or
     \begin{align*}
       \beta>a,\  \frac{\alpha-1}{q-1}(s+t-q+1)= s+1,\ \frac{s+t-q+1}{q-1}a\geq 1.
     \end{align*}
     Now we begin to prove Theorem 1.8 (1)-(3). 
     
     ~\\
     (a) $(s,t)\in G_{1,a}$. Let $\alpha=\frac{qs+t}{s+t-q+1}>q $ and $\beta=\frac{q-1}{s+t-q+1}+\epsilon$ for $\epsilon>0$. Then the model manifold $(\mathbb{R}^n,g)$ satisfies the volume condition in Theorem 1.8 (1). Choosing $\frac{s+t-q+1}{q-1}<a<\beta$, then we have
     \begin{align*}
       \  \frac{\alpha-1}{q-1}(s+t-q+1)=s+1,\ \frac{s+t-q+1}{q-1}a> 1.
     \end{align*}
     This yields that there exists $R_0>>1$ such that
    \begin{align}
         (S|u_R^{'}|^{q-2}u_R^{'})^{'}+k(S|u_R^{'}|^{p-2}u_R^{'})^{'}+ Su_R^s|u_R^{'}|^{t}\leq 0,\quad r \in [R_0,\infty)
     \end{align}
     for $0<k \leq 1$. Combining with (4.3), we have 
     \begin{align}
         \Delta_qu+ k\Delta_p u+\lambda_0 u^s|\nabla u|^t\leq 0,
     \end{align}
    where $\lambda_0=min\{1,\lambda \}$. Letting $k=\lambda_0^{\frac{p-q}{s+t-q+1}}\leq 1$ and $u=\lambda_0^{-\frac{1}{s+t-q+1}}v$, we get a nontrivial positive weak solution $v$ to 
     \begin{align*}
         \Delta_qv+ \Delta_p v+v^s|\nabla v|^t\leq 0
     \end{align*}
     on $(\mathbb{R}^n,g)$.

     ~\\
   (b) $(s,t)\in G_{1,b}\bigcup (G_{2,a}\cap\{t>q\})$. Let $\alpha=q $ and $\beta=q-1+\epsilon$ for $\epsilon>0$. Choosing $q-1<a<\beta$ and using $s+t-q+1>0$, we have
       \begin{align*}
       \  \frac{\alpha-1}{q-1}(s+t-q+1)>s+1, \quad  t>q
     \end{align*}    
     or
     \begin{align*}
         \frac{\alpha-1}{q-1}(s+t-q+1)= s+1,\ \frac{s+t-q+1}{q-1}a> 1,\quad (s\geq 0,t=q)\in G_{1,b}.
     \end{align*}
     By the same argument in (1), there exists a nontrivial positive function $v$ to 
     \begin{align*}
         \Delta_qv+ \Delta_p v+v^s|\nabla v|^t\leq 0.
     \end{align*}  
     
   ~\\
   (c) $(s,t)\in (G_{3,c}\cap \{t>p-1\})\bigcup (G_{2,a}\cap\{t=q\})$. Letting $\alpha=\frac{t}{t-q+1} $ and $\beta=\frac{q-1}{t-q+1}+\epsilon$ for $\epsilon>0$, then we have 
   \begin{align*}
       V(r)\leq C r^{\frac{t}{t-q+1}}(\mathrm{ln}\ r)^{\frac{q-1}{t-q+1}+\epsilon}, \quad r>>1.
   \end{align*}
   By considering the case $s=0$ in $G_1$, it follows that the model manifold $(\mathbb{R}^n,g)$ admits a a nontrivial nonnegative solution $v$ to
   \begin{align*}
         \Delta_qv+ \Delta_p v+|\nabla v|^t\leq 0.
     \end{align*}
    Let $f=v+1$, then we obtain
    \begin{align*}
        \Delta_qf+ \Delta_p f+f^s|\nabla f|^t\leq 0
    \end{align*}
    since $s<0$.
   \qed

~\\
        $\mathit{Proof\ of\ Theorem\ 1.8\ (4)-(5)}.$ 
        \\
        (a) $(s,t)\in G_{4,a}$. For $r>>1$, letting $S(r)=e^{\lambda r}$, then $V(r)\leq Ce^{\lambda r}$. By the result in \cite{SXX}, there exists $c,d>0$ such that $u_c(r)=d+r^{-c}$ is the solution to the inequality
        \begin{align*}
            \Delta_q u+u^s|\nabla u|^t\leq 0,\quad  r>>1
        \end{align*}
      on $(\mathbb{R}^n,g)$. Note that $\Delta_p u_R\leq0$ for $r\in [0,R_0]$.  Since
      \begin{align*}
          (S|u_c^{'}|^{p-2}u_c^{'})^{'}=&-c^{p-1}(e^{\lambda r}r^{-(c+1)(p-1)})^{'}
          \\
          =&-c^{p-1}e^{\lambda r}r^{-(c+1)(p-1)}(\lambda-\frac{(c+1)(p-1)}{r}),
      \end{align*}
      there exists $R_0$ such that $(S|u_c^{'}|^{p-2}u_c^{'})^{'}\leq 0 $ for $r>R_0$, i.e., $\Delta_p u_c\leq0 $. Thus we can glue the two parts to obtain a solution to (1.5).
      
      ~\\
      (b) $(s,t)\in G_{4,b}$. For $r>>1$, letting $S(r)=e^{\lambda r^\gamma\mathrm{ln}r}$, then $V(r)<Ce^{\lambda r^\gamma\mathrm{ln}r}$. From \cite{SXX}, the function $u(r)=\frac{\mathrm{ln}r}{r}$ satisfies
        \begin{align*}
            \Delta_q u+u^s|\nabla u|^t\leq 0,\quad  r>>1
        \end{align*}
      on $(\mathbb{R}^n,g)$. As in the above, we can prove that 
      \begin{align*}
          \Delta_p u(r)\leq0, \quad  r>>1.
      \end{align*}
      Hence we obtain a positive solution to 
      \begin{align*}
            \Delta_q u+\Delta_pu+u^s|\nabla u|^t\leq 0.
        \end{align*}
        \qed
\bibliographystyle{siam}
\bibliography{ref}

\end{document}